\documentclass{article}

\usepackage{graphicx}
\usepackage{verbatim}
\usepackage{enumitem}
\usepackage{caption}

\usepackage{float}
\usepackage{fancybox}
\usepackage{textcomp}
\usepackage{color}

\usepackage{amsmath}
\usepackage{amsfonts}
\usepackage{amssymb}
\usepackage{amsthm}
\usepackage{makeidx,bbm}

\usepackage{mathrsfs}
\usepackage{eurosym}
\usepackage[T1]{fontenc}

\usepackage{makeidx}

\usepackage[utf8]{inputenc}
\usepackage[english]{babel}
\usepackage{young}
\usepackage[vcentermath]{youngtab}
\usepackage{tikz}
\usetikzlibrary{calc}
\usepackage[nomessages]{fp}
\usepackage{amsthm}
\usepackage{adjustbox}
\usepackage{amsmath}
\usepackage[affil-it]{authblk}
\usepackage{mathtools}
\usepackage{chemfig}
\usepackage{parallel,enumitem}
\usepackage{pdfcolparallel}
\usepackage{setspace}
\usepackage{fancyhdr}

\theoremstyle{plain}
\newtheorem{thm}{Theorem}[section]

\newtheorem{cor}{Corollary}[thm]
\newtheorem{conj}[thm]{Conjecture}

\newtheorem{remark}[thm]{Remark}
\theoremstyle{definition}
\newtheorem{defn}[thm]{Definition}
\newtheorem{exmp}[thm]{Example}

\newcommand{\R}{\mathbb{R}}
\newcommand{\vs}{\vec w}

\hoffset -1in
\voffset -1in
\oddsidemargin 30mm
\evensidemargin \oddsidemargin
\textwidth 150mm
\topmargin 5mm
\textheight 247mm

\pagestyle{fancy}
\lhead{\large\ \\ \  \\ \  \\ \  \\  UNIVERSIT\'E DE GEN\`EVE\\Section de mathématiques\\ \setstretch{0.5} }

\cfoot{}
\lfoot{}
\rfoot{}

\begin{document} 

\title{Planar graphs and Stanley's Chromatic Functions} 
\author{Alexander Paunov} 
\date{February 2016} 
\maketitle

\pagestyle{plain}

\begin{abstract}

  This article is dedicated to the study of positivity phenomena
  for the chromatic symmetric function of a graph with respect to various bases of symmetric functions.

We give a new proof of Gasharov's theorem on the Schur-positivity of  the chromatic symmetric function of a $(3+1)$-free poset. We present a combinatorial interpretation of the Schur-coefficients in
  terms of planar networks. Compared to Gasharov's proof, it gives a
  clearer visual illustration of the cancellation procedures and is
  quite similar in spirit to the proof of monomial positivity of Schur
  functions via the Lindström–Gessel–Viennot lemma. 

  We apply a similar device to the $e$-positivity problem of chromatic
  functions. Following Stanley, we analyze certain analogs
  of symmetric functions attached to graphs instead of working with chromatic
  symmetric functions of graphs directly.  We introduce a new
  combinatorial object: the {\em correct} sequences of unit interval
  orders, and, using these, we reprove monomial positivity of
  $G$-analogues of the power sum symmetric functions.

\end{abstract}

\section{Introduction}

Let $G$ be a finite graph, $V(G)$ - the set of vertices of $G$, $E(G)$ - the set of edges of $G$. 

\begin{defn} \label{coloring} A {\em proper coloring} $c$ of  $G$ is
  a map $$c:V\rightarrow\mathbb{N}$$ such that no two adjacent
  vertices are colored in the same color.
\end{defn}

For each coloring $c$ we define a monomial $$x^c = \prod_{v\in
  V}x_{c(v)},$$ where $x_1, x_2, ..., x_n,...$ are commuting
variables.  We denote by $\Pi(G)$ the set of all proper colorings
of $G$, and by  $\Lambda$ the ring of symmetric functions in the infinite set
of variables $\{x_1, x_2,...\}.$

In \cite{Stanley95a}, Stanley defined the chromatic symmetric function of a graph.

\begin{defn} \label{chromfunction} The \em chromatic symmetric
  function \normalfont $X_G\in\Lambda$ of a graph $G$ is the sum of the monomials $x^c$
  over all proper colorings of
  $G$: $$X_G=\sum\limits_{c\in\Pi(G)}x^c.$$
\end{defn}

\begin{defn} \label{efunc} Denote by $e_m$ the $m$-th elementary
  symmetric function:
  $$e_m = \sum\limits_{i_1<i_2<...<i_m}x_{i_1}\cdot
  x_{i_2}\cdot...\cdot x_{i_m},$$
  where $i_1,..,i_k\in \mathbb{N}$.  Given a non-increasing sequence of
  positive integers (we will call these {\em partitions})
  $$\lambda = (\lambda_1\geq \lambda_2\geq...\geq\lambda_k),\ \lambda_i\in
  \mathbb{N},$$
  we define the elementary symmetric function
  $e_{\lambda} = \prod\limits_{i=1}^k e_{\lambda_i}.$ These functions
  form a basis of $\Lambda.$
\end{defn}
For a natural number $k$, we denote by $1^k$ the partition $\lambda$ of length $k$, where $$\lambda_1=\lambda_2=...=\lambda_k=1.$$

\begin{defn} \label{epos} A symmetric function $X\in \Lambda$ is \em
  $e$-positive \normalfont if it has non-negative coefficients in the
  basis of the elementary symmetric functions.
\end{defn}

\begin{defn} \label{pfunc} Denote by $p_m$ the $m$-th power sum
  symmetric function: $$p_m =
  \sum\limits_{i\in\mathbb{N}}x^m_{i}.$$ Given a
  partition $\lambda = (\lambda_1\geq \lambda_2\geq...\geq\lambda_k)$, we define the power sum
  symmetric function
  $p_{\lambda} = \prod\limits_{i=1}^k p_{\lambda_i}.$ These functions also
  form a basis of $\Lambda.$
\end{defn}

\begin{defn} \label{mfunc}
Given a partition $\lambda = (\lambda_1\geq \lambda_2\geq...\geq\lambda_k)$,  we define the monomial symmetric function $$m_\lambda=\sum\limits_{i_1<i_2<...<i_k}\sum\limits_{\lambda'\in S_k(\lambda)}x_{i_1}^{\lambda_{1}'}\cdot
  x_{i_2}^{\lambda_{1}'}\cdot...\cdot x_{i_k}^{\lambda_{k}'},$$
where the inner sum is taken over the set of all permutations of the sequence $\lambda$, denoted by $S_k(\lambda)$.
\end{defn}

\begin{exmp}
  The chromatic symmetric function of $K_n$, the complete graph on $n$
  vertices, is $e$-positive: $X_{K_n} = n!\,e_n$.
\end{exmp}

\begin{defn} \label{incgraph} For a poset $P$, the \em incomparability
  graph\normalfont, $\textnormal{inc}(P)$, is the graph with elements
  of $P$ as vertices, where two vertices are connected if and only if
  they are not comparable in $P$.
\end{defn} 

\begin{defn} \label{nplusmfree} Given a pair of natural numbers
  $a,b\in\mathbb{N}^2$, we say that a poset $P$ is \em (a+b)-free
  \normalfont if it does not contain a length-$a$ and a length-$b$
  chain, whose elements are mutually incomparable.
\end{defn} 

\begin{defn} A unit interval order (UIO) is a partially ordered set
  which is isomorphic to a finite subset of $U\subset\R$ with the following poset structure:
\[ \text{for } u,w\in U:\    u\succ w \text{ iff } u\ge w+1.
\]
Thus $u$ and $w$ are incomparable precisely when $|u-w|<1$ and we will
use the notation $u\sim w$ in this case. 
\end{defn}
\begin{thm}[Scott-Suppes \cite{Scott-Suppes54}]\label{S_S}
A finite poset $P$ is a UIO if and only if it is $(2+2)$- and $(3+1)$-free.
\end{thm}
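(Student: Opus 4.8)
The plan is to prove the Scott–Suppes characterization of unit interval orders. Let me think about both directions.The statement has two directions, and I would treat them separately.

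\textbf{Necessity.} This is the easy direction, dispatched by direct computation with the embedding. Fix $U\subset\mathbb{R}$ realizing $P$, so that $u\succ w\iff u\ge w+1$ and $u\sim w\iff|u-w|<1$. If $P$ contained a $2+2$, say chains $a\prec b$ and $c\prec d$ with all four cross pairs incomparable, then WLOG $a\le c$ in $\mathbb{R}$; from $c\prec d$ we get $d\ge c+1\ge a+1$, while $a\sim d$ forces $d<a+1$, a contradiction. If $P$ contained a $3+1$, say $a\prec b\prec c$ with $d$ incomparable to each, then $c\ge a+2$, but $a\sim d$ gives $d<a+1$ while $c\sim d$ gives $d>c-1\ge a+1$, again impossible. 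Hence any UIO is $(2+2)$- and $(3+1)$-free.

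\textbf{Sufficiency.} This is the substantive direction: from the two forbidden-subposet conditions I must manufacture a unit-interval embedding, and the plan proceeds in two stages. First I would extract interval-order structure from $(2+2)$-freeness. For $x\in P$ write $D(x)=\{z:z\prec x\}$ for the strict principal ideal; I claim the family $\{D(x)\}_{x\in P}$ is totally ordered by inclusion. Indeed, if $D(x)$ and $D(y)$ were $\subseteq$-incomparable, pick $a\in D(x)\setminus D(y)$ and $b\in D(y)\setminus D(x)$: transitivity together with $a\not\prec y$ and $b\not\prec x$ then forces all four cross pairs among $\{a,x\}$ and $\{b,y\}$ to be incomparable, exhibiting a $2+2$. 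Symmetrically the strict filters $U(x)=\{z:x\prec z\}$ are totally ordered by inclusion. Listing the distinct ideals as $\emptyset=I_0\subsetneq I_1\subsetneq\cdots\subsetneq I_p$ yields the classical interval representation, with $x\prec y$ exactly when the interval of $x$ lies entirely to the left of that of $y$.

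The second stage, where the real work lies, is to upgrade this interval representation to a unit one using $(3+1)$-freeness. The obstruction to unit length is an interval strictly containing another, so I would show that $(3+1)$-freeness forbids precisely the configurations producing such containment, allowing the representation to be taken \emph{proper}, and then appeal to the standard fact that a finite proper interval order is a unit interval order. Concretely, I would synchronize the two nested chains $\{D(x)\}$ and $\{U(x)\}$ into a single total preorder $\lesssim$ on $P$, arguing that $(3+1)$-freeness guarantees $D(x)\subseteq D(y)\iff U(y)\subseteq U(x)$, and then assign real coordinates level by level along $\lesssim$, separating consecutive levels by a controlled amount so that the threshold ``$\ge 1$'' reproduces $\succ$ exactly.

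\textbf{Main obstacle.} The hard part is the consistency of this numeric assignment. The nesting furnished by $(2+2)$-freeness only produces an interval order, in which one still has two independent endpoints per element; collapsing this to a single coordinate with a uniform unit threshold — never placing incomparable elements $\ge 1$ apart nor comparable elements $<1$ apart — is exactly where $(3+1)$-freeness becomes indispensable. Verifying that the level-by-level placement never conflicts, equivalently that a proper representation exists and can be rescaled to unit length, is the technical heart of the argument.
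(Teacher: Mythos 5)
The paper itself offers no proof of this statement---it is quoted as the classical Scott--Suppes theorem with a citation to \cite{Scott-Suppes54}---so your attempt must be judged on its own. Your necessity direction is complete and correct, and the first stage of your sufficiency direction (the Fishburn-style argument that $(2+2)$-freeness makes the strict ideals $D(x)$, and dually the strict filters $U(x)$, chains under inclusion, hence that $P$ is an interval order) is also sound: the four case-checks behind ``all four cross pairs are incomparable'' all go through by transitivity exactly as you indicate.

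The gap is in the second stage, and it is twofold. First, the lemma you state there is false: $(3+1)$-freeness does \emph{not} give $D(x)\subseteq D(y)\iff U(y)\subseteq U(x)$. Take $P=\{x,y,c\}$ with the single relation $y\prec c$; this is a UIO (realize $y=0$, $x=\tfrac12$, $c=1$), yet $D(x)=D(y)=\emptyset$, so $D(x)\subseteq D(y)$, while $U(y)=\{c\}\not\subseteq U(x)=\emptyset$. What $(3+1)$-freeness actually yields is the one-sided ``no strict crossing'' statement: one cannot have both $D(x)\subsetneq D(y)$ and $U(x)\subsetneq U(y)$, since witnesses $a\prec y$, $a\not\prec x$ and $c\succ y$, $c\not\succ x$ make $a\prec y\prec c$ a $3$-chain with all three elements incomparable to $x$ (check $x\sim y$, $x\sim a$, $x\sim c$ by transitivity and the two strict containments). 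Combined with the two chain conditions from $(2+2)$-freeness this does produce a single total preorder refining both containment orders, so your plan is repairable. Second, and more seriously, the actual production of unit coordinates---which you yourself flag as ``the technical heart''---is never carried out: you either gesture at a level-by-level placement whose consistency is exactly the thing to be proved, or you appeal to ``the standard fact that a finite proper interval order is a unit interval order.'' That fact (Roberts' theorem, in its order form) is precisely the nontrivial content of Scott--Suppes beyond Fishburn's characterization, so invoking it without proof makes the argument essentially circular. To close the gap you would need, e.g., to list the elements $v_1,\dots,v_n$ compatibly with the merged preorder, define $f(v_j)$ inductively as a maximum of $f(v_{j-1})$ plus a small increment and $f(v_i)+1$ over all $v_i\prec v_j$, and then verify---using the corrected no-crossing lemma---that this assignment never forces $f(v_j)\geq f(v_i)+1$ for an incomparable pair $v_i\sim v_j$ with $i<j$; that verification is the missing proof, not a routine check.
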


Stanley~\cite{Stanley95a} initiated the study of incomparability
graphs of $(3+1)$-free partially ordered sets. Analyzing the chromatic
symmetric functions of these incomparability graphs,
Stanley~\cite{Stanley95a} stated the following positivity conjecture.

\begin{conj}[Stanley] \label{eposconj}
If $P$ is a $(3+1)$-free poset, then $X_{\textnormal{inc}(P)}$ is $e$-positive.
\end{conj}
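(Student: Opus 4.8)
The plan is to reduce the conjecture to the special case of unit interval orders and then to attack that case by the same cancellation philosophy used for the Schur result. By the Scott--Suppes theorem (Theorem~\ref{S_S}), a finite poset is a UIO exactly when it is both $(2+2)$- and $(3+1)$-free, so every UIO is in particular $(3+1)$-free and the UIO case is genuinely a sub-case of the conjecture. The first step is to go the other way: I would express $X_{\textnormal{inc}(P)}$, for an arbitrary $(3+1)$-free $P$, as a non-negative (indeed convex) combination of chromatic functions $X_{\textnormal{inc}(Q)}$ with $Q$ ranging over UIOs. Such a relation follows from the local \emph{modular-law} identities satisfied by chromatic symmetric functions when one modifies a poset across an incomparable pair; iterating these identities drives any $(3+1)$-free poset into the cone spanned by UIOs. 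Since a non-negative combination of $e$-positive functions is $e$-positive, it then suffices to prove the conjecture for UIOs alone.

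Second, following the indirect strategy announced in the abstract, I would not expand $X_{\textnormal{inc}(P)}$ directly in the $e_\lambda$ but would route through the power-sum data. For a UIO the proper colorings are controlled by how blocks of mutually incomparable vertices may share a colour, and this structure is exactly what the correct sequences of the paper are designed to enumerate; they yield a signed count realising the $p_\lambda$-information. The task is then to convert this power-sum description into the $e$-basis while keeping signs under control.

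Third --- and this is the crux --- I would look for a weight-preserving, sign-reversing involution on an explicit combinatorial model (a planar-network or lattice-path object analogous to the one used for Schur-positivity) whose total signed weight is $X_{\textnormal{inc}(P)}$ and whose fixed points are manifestly enumerated by an $e$-expansion with non-negative coefficients. Morally, the involution must absorb precisely the sign cancellations introduced in passing from $p_\lambda$ to $e_\mu$, leaving only configurations carrying the disjoint-cliques structure that $e_\mu$ encodes.

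The decisive obstacle is this last step, and I do not expect the earlier techniques to dispatch it. Monomial- and Schur-positivity are comparatively tractable because the transition from the power sums to those bases has a coherent sign pattern that an involution can follow; the $e$-basis, however, is tied to the power sums by coefficients of fluctuating sign, and no canonical matching is known that isolates a non-negative $e$-expansion. This is exactly the point at which essentially all prior approaches to the conjecture have stalled, so I would expect the construction of the sign-reversing involution in step three --- rather than the reduction of step one or the power-sum bookkeeping of step two --- to be the genuine difficulty, and I would not claim the earlier planar-network and correct-sequence machinery to settle the full $e$-positivity statement on its own.
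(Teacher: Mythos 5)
You were asked to prove a statement that the paper itself does not prove: Conjecture~\ref{eposconj} is Stanley's $e$-positivity conjecture, which the paper explicitly states is still open. There is therefore no paper proof to compare your attempt against. What the paper does establish are strictly weaker partial results obtained with the machinery you describe: Gasharov's $s$-positivity theorem (Theorem~\ref{sposthm}) via planar networks and the Lindstr\"om--Gessel--Viennot lemma, positivity of the single top coefficient $c_n$ (Theorem~\ref{eposn}), monomial positivity of the $G$-power-sum functions $p_k^U$ via correct sequences (Theorem~\ref{Ppos}), and the coefficient $m_{l,1}^U$ (Theorem~\ref{Thn1}, stated without proof).

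Your proposal is an accurate map of this territory rather than a proof, and you say so yourself, which is the correct position. Your step one is exactly Guay-Paquet's modular-relation reduction (Theorem~\ref{G_P}), which legitimately reduces the conjecture to unit interval orders, since a convex combination of $e$-positive functions is $e$-positive. Your step two is the paper's route through Stanley's $G$-homomorphism (Section~\ref{Ghom}): by Theorem~\ref{poscrit}, proving $e$-positivity of all $X_{G^\alpha}$ is equivalent to proving monomial positivity of every $m_\lambda^G$, and the correct-sequence technology of Theorem~\ref{Ppos} is aimed at precisely this target through $p$-to-$m$ expansions. Your diagnosis of step three is also right: the missing ingredient is a sign-reversing involution that controls the cancellations in passing to the $e$-basis (equivalently, to all the $m_\lambda^U$ simultaneously), and neither the paper's planar-network argument for Schur positivity nor its involution for $p_k^U$ extends to that case --- the paper only promises partial results on some $e$-coefficients in future work. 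So your submission should be graded as a correct assessment of the state of the problem, not as a proof; no proof exists to be reproduced here.
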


For a graph $G$ let us denote by ${c_\lambda}(G)$ the coefficients of $X_G$ with respect to the $e$-basis. We omit the index $G$ whenever this causes no confusion:
$$X_G=\sum\limits_{\lambda}c_{\lambda}e_\lambda.$$

Conjecture~\ref{eposconj} has been verified with the help of computers
for up to 20-element posets~\cite{Guay-Paquet13}. In 2013,
Guay-Paquet~\cite{Guay-Paquet13} showed that to prove this conjecture,
it would be sufficient to verify it for the case of $(3+1)$- and
$(2+2)$-free posets, i.e. for unit interval orders (see Theorem~\ref{S_S}). More precisely:

\begin{thm}[Guay-Paquet]\label{G_P}
  Let $P$ be a $(3+1)$-free poset. Then, $X_\mathrm{inc}(P)$ is a
  convex combination of the chromatic symmetric
  functions $$\{X_\mathrm{inc}(P')\ |\ P'\ \mathrm{is}\ \mathrm{a}\
  (3+1)\mathrm{-}\ \mathrm{and}\ (2+2)\mathrm{-free}\ \mathrm{poset}
  \}.$$

\end{thm}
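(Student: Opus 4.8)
The plan is to induct on a statistic measuring how far $P$ is from being a unit interval order. By Theorem~\ref{S_S}, since $P$ is already $(3+1)$-free, $P$ is a UIO precisely when it is $(2+2)$-free; so let $\mu(P)$ denote the number of induced $(2+2)$ subposets of $P$. If $\mu(P)=0$, then $P$ itself is a $(3+1)$- and $(2+2)$-free poset and the statement holds trivially, with $X_{\mathrm{inc}(P)}$ written as the one-term convex combination $1\cdot X_{\mathrm{inc}(P)}$. This is the base case.

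For the inductive step, suppose $\mu(P)>0$ and fix a $(2+2)$ configuration: elements $a\prec b$ and $c\prec d$ with each of $a,b$ incomparable to each of $c,d$. The heart of the argument is a local linear relation among chromatic symmetric functions. I would establish a \emph{modular relation} expressing
$$X_{\mathrm{inc}(P)} = \sum_i t_i\, X_{\mathrm{inc}(P_i)},\qquad t_i\ge 0,\quad \sum_i t_i =1,$$
where each $P_i$ is obtained from $P$ by altering only the comparabilities among $\{a,b,c,d\}$ (resolving the two incomparable chains into nested or concatenated chains), is again $(3+1)$-free, and satisfies $\mu(P_i)<\mu(P)$. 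Granting such a relation, the theorem follows at once: by the inductive hypothesis each $X_{\mathrm{inc}(P_i)}$ is a convex combination of chromatic functions of UIOs, and a convex combination of convex combinations is again convex.

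It remains to produce the modular relation, which is the technical core. I would first prove the underlying \emph{signed} linear identity among the $X_{\mathrm{inc}(P_i)}$ by a sign-reversing involution on proper colorings: colorings that are proper for some but not all of the relevant graphs are matched in cancelling pairs by swapping colors along the affected chains, so that the monomials $x^c$ not common to all the graphs cancel in the signed sum. This is the same color-swapping mechanism on a local configuration that underlies the Stanley--Stembridge and Orellana--Scott modular laws for $X_G$.

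The main obstacle is twofold. First, one must choose the local move so that it stays inside the class of $(3+1)$-free posets and \emph{strictly decreases} $\mu$; a careless resolution of a $(2+2)$ can create a new $(3+1)$ or a new $(2+2)$ elsewhere, breaking both the induction and the hypotheses. Second, and more delicate, one must upgrade the signed linear identity to a genuinely \emph{convex} one, checking that the coefficients $t_i$ are nonnegative, sum to $1$, and that convexity survives as the reductions compose across many $(2+2)$ patterns. I would organize this bookkeeping through a Hopf-algebra structure on marked posets, in the spirit of Guay-Paquet: realize the chromatic symmetric function as an algebra map, show the modular relations generate a Hopf ideal compatible with it, and deduce that natural unit interval orders form a spanning set in the quotient through which $X$ factors, the change-of-basis coordinates being exactly the convex weights $t_i$. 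Verifying that this quotient is spanned by UIOs, and that the induced coordinates are nonnegative, is where the real work lies.
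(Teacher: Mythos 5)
You should first be aware that the paper contains no proof of Theorem~\ref{G_P} to compare against: it is quoted from \cite{Guay-Paquet13} and used as a black box to reduce the positivity questions to unit interval orders. Judged as a reconstruction of Guay-Paquet's argument, your proposal correctly identifies its overall shape (a local modular relation plus an induction terminating at $(2+2)$-free posets), but what you have written is a plan rather than a proof: every load-bearing step is introduced with ``I would establish'' and then deferred. The modular relation itself --- which \emph{is} the theorem --- is never produced: you exhibit no explicit posets $P_i$, no coefficients $t_i$, and no proof mechanism. Guay-Paquet's relation is a three-term identity of the form $2X_{\mathrm{inc}(P_1)} = X_{\mathrm{inc}(P_0)} + X_{\mathrm{inc}(P_2)}$, where $P_0, P_1, P_2$ differ by adding relations at a carefully chosen pair of elements with nested down-sets and up-sets (not by modifying a generic induced $(2+2)$ on four elements), so the convex weights are the explicit $\tfrac12$'s; your proposed sign-reversing involution on proper colorings cannot literally establish such a weighted identity, since a two-term cancelling involution proves $X - X' = 0$, not $X + X'' = 2X'$ --- one needs a $2$-to-$1$ correspondence or an identity of generating functions, and you specify neither.

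The inductive skeleton you do commit to is also flawed as stated. You induct on $\mu(P)$, the number of induced $(2+2)$ subposets, and require every $P_i$ in the relation to be $(3+1)$-free with $\mu(P_i) < \mu(P)$. But in a modular relation of the above form the expressed poset sits strictly between one poset with more comparabilities and one with fewer, so there is no reason both replacements decrease a naive count of $(2+2)$'s; and resolving the chosen $(2+2)$ by ``nesting or concatenating'' the chains can create new $(3+1)$ patterns, destroying the hypothesis needed to recurse. You name both failure modes yourself, which is honest, but the paragraph intended to repair them --- modular relations generating a Hopf ideal, UIOs spanning the quotient, coordinates nonnegative --- simply restates the theorem in different language (and belongs to Guay-Paquet's later Hopf-algebraic work on the Shareshian--Wachs conjecture rather than to \cite{Guay-Paquet13}, which handles termination by a bespoke well-founded statistic, not by $\mu$). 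So the genuine gaps are concrete: the local identity is unproved, the convexity coefficients are unspecified, and the well-foundedness of the recursion is asserted exactly where it is false for your chosen statistic.
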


The strongest general result in this direction is that of Gasharov
\cite{Gasharov94}.

\begin{defn} \label{sfunc} For a partition $\lambda = (\lambda_1\geq
  \lambda_2\geq...\geq\lambda_k)$, define the Schur
  functions \em $s_{\lambda}=\mathrm{det}(e_{\lambda_i^*+j-i})_{i,j}$,
  \normalfont where $\lambda^*$ is the conjugate partition to
  $\lambda$. The functions $\{ s_{\lambda}\}$ form a basis of
  $\Lambda$.
\end{defn}

\begin{defn} \label{spos}
A symmetric polynomial $X$ is \em $s$-positive \normalfont if it has non-negative coefficients in the basis of Schur functions.
\end{defn}

Obviously, a product of $e$-positive functions is $e$-positive. This also holds for $s$-positive functions. Thus, the equality $e_n=s_{1^n}$ implies that $e$-positive functions are $s$-positive, and thus $s$-positivity is weaker than $e$-positivity.

\begin{thm}[Gasharov] \label{sposthm} 
If $P$ is a $(3+1)$-free poset, then $X_{\textnormal{inc}(P)}$ is $s$-positive.
\end{thm}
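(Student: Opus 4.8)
The plan is to prove the sharper, manifestly positive statement that the Schur coefficients of $X_{\mathrm{inc}(P)}$ \emph{count} a concrete family of fillings. Call a bijection $T$ from the cells of the Young diagram of a partition $\lambda\vdash|P|$ onto the elements of $P$ a \emph{$P$-tableau} if each row is strictly increasing in $P$ read left to right (so $T_{i,j}<_P T_{i,j+1}$) and no column has its upper entry strictly above its lower entry in $P$ (so one never has $T_{i,j}>_P T_{i+1,j}$). Writing $a_\lambda$ for the number of $P$-tableaux of shape $\lambda$, the goal is the refined identity
$$X_{\mathrm{inc}(P)}=\sum_{\lambda}a_\lambda\,s_\lambda,$$
after which Theorem~\ref{sposthm} is immediate because every $a_\lambda\ge 0$. (For $P$ an $n$-element antichain this already recovers $X_{K_n}=n!\,s_{1^n}$: the only admissible shape is a single column and all $n!$ fillings qualify.) The first step is the elementary translation that a proper coloring of $\mathrm{inc}(P)$ is exactly a map $c\colon P\to\mathbb N$ each of whose color classes is a chain of $P$, so $X_{\mathrm{inc}(P)}=\sum_c x^c$ ranges over partitions of $P$ into colored chains. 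If one prefers, one may first invoke Theorem~\ref{G_P} to reduce to unit interval orders, but the argument below uses only $(3+1)$-freeness and needs no such reduction.

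To prove the displayed identity I would expand the right-hand side into a signed set and cancel. Writing each Schur function through the Jacobi--Trudi determinant $s_\lambda=\det(h_{\lambda_i-i+j})$ and each $h_m$ as a generating function over weakly increasing color words of length $m$, pairing a $P$-tableau of shape $\lambda$ with such an expansion presents $\sum_\lambda a_\lambda s_\lambda$ as a signed sum over a set $\Omega$ of \emph{$P$-arrays} --- tuples of chains of $P$ that partition $P$, one per row, each carrying a weakly increasing color word, weighted by the sign of the Jacobi--Trudi permutation. I would then define a weight-preserving, sign-reversing involution on $\Omega$, in the spirit of the Lindström--Gessel--Viennot proof of the monomial positivity of $s_\lambda$: interpreting the rows as directed paths in a planar network built from $P$, the involution exchanges the tails of two paths past a crossing, reversing the sign. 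All crossing objects cancel in pairs, and the surviving fixed points are exactly the non-crossing families; these biject, with equal weight, with proper colorings of $\mathrm{inc}(P)$, so their generating function is $X_{\mathrm{inc}(P)}$. Comparing the two evaluations of $\sum_\Omega\mathrm{sgn}\cdot x^{(\mathrm{colors})}$ then yields the identity.

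The single place where the hypothesis is indispensable, and what I expect to be the technical heart, is checking that exchanging tails sends $P$-arrays to $P$-arrays, i.e. that after a swap the two reattached rows are \emph{still} chains of $P$. This reduces to a purely order-theoretic lemma: in a $(3+1)$-free poset no element can be incomparable to both endpoints of a three-element chain. Indeed, if $u_1<_P u_2<_P u_3$ and $v$ were incomparable to $u_1$ and to $u_3$, then $v<_P u_2$ would force $v<_P u_3$ and $v>_P u_2$ would force $v>_P u_1$ by transitivity, so $v$ would also be incomparable to $u_2$, exhibiting an induced $(3+1)$ and contradicting the hypothesis. This is precisely what keeps the predecessor of a swapped entry below its new successor: whenever the naive comparison is ambiguous, the would-be incomparabilities place an element against both ends of a three-chain lying in one of the rows, which $(3+1)$-freeness forbids. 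Pinning down exactly which comparabilities must be checked at a crossing, and arranging the reattachment so that the lemma always applies, is the delicate part; granting it, the remaining work --- matching weights, identifying the non-crossing fixed points with $P$-tableau/SSYT pairs, and the sign bookkeeping --- runs parallel to the classical Lindström--Gessel--Viennot proof of Schur positivity.
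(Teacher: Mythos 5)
Your outline is sound, but it is not the paper's proof: it is, in all essentials, Gasharov's original argument, which this paper explicitly sets out to replace. You work directly with an arbitrary $(3+1)$-free poset $P$, define $P$-tableaux, expand through the Jacobi--Trudi determinant in the $h$'s, and cancel crossing chain-arrays by a tail-swap involution whose well-definedness rests on the order-theoretic lemma that no element of a $(3+1)$-free poset can be incomparable to both endpoints of a $3$-chain; your verification of that lemma is correct, and it is indeed the crux of Gasharov's proof. The paper instead (i) invokes Guay-Paquet's Theorem~\ref{G_P} to reduce to unit interval orders, (ii) transfers the problem through Stanley's $G$-homomorphism (Theorem~\ref{poscrit}), so that $s$-positivity of all the $X_{G^\alpha}$ becomes monomial positivity of the polynomials $s^G_\lambda$, and (iii) evaluates $\det\bigl(e^G_{\lambda_i+j-i}\bigr)$ by the Lindström--Gessel--Viennot lemma (Theorem~\ref{L_G_V}) on a grid $\Gamma_G$ that is planar precisely because $U$ is a UIO: every non-intersecting multipath is then forced to carry the identity permutation, so positivity is automatic and no poset-theoretic cancellation lemma is needed at the swap step at all. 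This exposes one conflation in your sketch: for a general $(3+1)$-free poset there is no planar network --- planarity is exactly the UIO-specific feature the paper exploits --- so your involution cannot literally be a geometric exchange ``past a crossing'' and must be defined purely combinatorially, as Gasharov does; relatedly, the cleaner bookkeeping in your direction is to compute $a_\lambda$ as the coefficient of $s_\lambda$ via Jacobi--Trudi as a signed count of chain arrays whose surviving fixed points are the $P$-tableaux, rather than expanding $\sum_\lambda a_\lambda s_\lambda$ and matching fixed points with colorings. As for what each route buys: yours yields the explicit interpretation of the Schur coefficients as counts of $P$-tableaux for \emph{every} $(3+1)$-free poset, with no appeal to Guay-Paquet's much later convexity theorem; the paper's yields, for UIOs, the stronger conclusion $s^G_\lambda\in\mathbb{N}[V(G)]$ of Theorem~\ref{sgmpos} --- hence $s$-positivity of all clan expansions $X_{G^\alpha}$, not just $X_G$ --- together with a transparent geometric picture of the cancellations, at the cost of the reduction step.
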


Gasharov proved $s$-positivity by  constructing so-called $P$-tableau and finding a one-to-one correspondence between these tableau and $s$-coefficients~\cite{Gasharov94}. However, $e$-positivity conjecture~\ref{epos} is still open. 
The strongest known result on the $e$-coefficients was obtained by Stanley in~\cite{Stanley95a}. He showed that sums of $e$-coefficients over the partitions of fixed length are non-negative:

\begin{thm}[Stanley]
For a finite graph $G$ and $j\in\mathbb{N}$, suppose $$X_G=\sum\limits_{\lambda}c_{\lambda}e_\lambda,$$
and let $\text{sink}(G,j)$ be the number of acyclic orientation of $G$ with $j$ sinks. Then
$$\text{sink}(G,j)=\sum\limits_{l(\lambda)=j}c_{\lambda}.$$
\end{thm}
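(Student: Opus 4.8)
The plan is to isolate the length statistic through a single algebraic specialization and then match the result to acyclic orientations. Write $s_j:=\sum_{l(\lambda)=j}c_\lambda$, so that the claim becomes $s_j=\mathrm{sink}(G,j)$ for all $j$. Since the elementary symmetric functions $e_1,e_2,\dots$ are algebraically independent generators of $\Lambda$, there is a unique $\R$-algebra homomorphism $\phi\colon\Lambda\to\R[t]$ determined by $\phi(e_m)=t$ for every $m\ge1$. Because $e_\lambda=\prod_{i=1}^{l(\lambda)}e_{\lambda_i}$, we get $\phi(e_\lambda)=t^{\,l(\lambda)}$, hence
\[
\phi(X_G)=\sum_\lambda c_\lambda\,t^{\,l(\lambda)}=\sum_j s_j\,t^j .
\]
On the other hand $\sum_{o}t^{\mathrm{sink}(o)}=\sum_j \mathrm{sink}(G,j)\,t^j$, where $o$ ranges over the acyclic orientations of $G$ and $\mathrm{sink}(o)$ denotes the number of sinks of $o$. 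Thus it suffices to prove the single polynomial identity $\phi(X_G)=\sum_{o}t^{\mathrm{sink}(o)}$.

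Next I would compute $\phi$ on the power-sum basis and feed it into Stanley's subgraph expansion. From the generating-function identity $\sum_{m\ge0}e_mq^m=\exp\!\big(\sum_{m\ge1}(-1)^{m-1}p_m q^m/m\big)$, applying $\phi$ turns the left side into $1+t\,q/(1-q)=\big(1+(t-1)q\big)/(1-q)$; taking logarithms and comparing coefficients of $q^m$ yields $\phi(p_m)=(t-1)^m+(-1)^{m-1}$ (for instance $\phi(p_1)=t$ and $\phi(p_2)=t^2-2t$). Recalling Stanley's power-sum expansion $X_G=\sum_{S\subseteq E(G)}(-1)^{|S|}p_{\lambda(S)}$ from~\cite{Stanley95a}, where $\lambda(S)$ is the partition of the sizes of the connected components of the spanning subgraph $(V(G),S)$, we obtain the explicit polynomial
\[
\phi(X_G)=\sum_{S\subseteq E(G)}(-1)^{|S|}\prod_{C}\big[(t-1)^{|C|}+(-1)^{|C|-1}\big],
\]
the product being over the connected components $C$ of $(V(G),S)$. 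As a check, at $t=1$ the factors become $(-1)^{|C|-1}$ and, via Whitney's theorem $\chi_G(n)=\sum_S(-1)^{|S|}n^{\,l(\lambda(S))}$ for the chromatic polynomial $\chi_G$ evaluated at $n=-1$, the sum collapses to $(-1)^{|V(G)|}\chi_G(-1)$, the total number of acyclic orientations of $G$ by Stanley's classical reciprocity; this is precisely the $t=1$ shadow $\sum_j s_j=\sum_j\mathrm{sink}(G,j)$ of the full claim.

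The remaining and hardest step is to match the subgraph sum above with $\sum_o t^{\mathrm{sink}(o)}$. The conceptual reason a sink statistic should appear is that grouping proper colorings by the acyclic orientation they induce — orienting each edge from the larger toward the smaller color — gives $X_G=\sum_o F_o$, where $F_o$ generates the colorings that strictly decrease along $o$, and the sinks of $o$ are exactly its minimal vertices. To turn the displayed identity into this form I would expand each factor $(t-1)^{|C|}$ by the binomial theorem, reorganize the resulting signed sum over pairs consisting of an edge set together with a set of marked vertices by inclusion–exclusion, and build a sign-reversing involution whose fixed points are indexed by the acyclic orientations, each surviving with weight $t^{\mathrm{sink}(o)}$. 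The main obstacle is to design this involution so that it preserves the exponent of $t$ on the non-cancelling terms, that is, so that the sink statistic is transported faithfully through the cancellation; the unweighted $t=1$ computation above is the template for the cancellation, and the real work is to refine it while keeping exact track of sinks.
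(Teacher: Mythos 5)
You should first know that the paper does not actually prove this theorem: it quotes it from \cite{Stanley95a}, so your attempt can only be measured against Stanley's original argument and on its own merits. Everything formal in your proposal is correct, and it coincides with the natural (and Stanley's) opening move: the homomorphism $\phi$ with $\phi(e_m)=t$ is well defined by algebraic independence of the $e_m$, $\phi(e_\lambda)=t^{l(\lambda)}$, the computation $\phi(p_m)=(t-1)^m+(-1)^{m-1}$ checks out against $\log\bigl(\bigl(1+(t-1)q\bigr)/(1-q)\bigr)$, the subgraph expansion $X_G=\sum_{S\subseteq E}(-1)^{|S|}p_{\lambda(S)}$ is the right input, and your $t=1$ sanity check via Whitney's theorem and $(-1)^{|V(G)|}\chi_G(-1)$ is accurate. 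So the theorem is correctly reduced to the single identity $\phi(X_G)=\sum_{o}t^{\mathrm{sink}(o)}$.

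The genuine gap is that this identity is never proved, and it is not a residual technicality: it is equivalent to the theorem itself, since everything preceding it is bookkeeping. Your sign-reversing involution is postulated, not constructed, and there are concrete reasons it is not routine. After expanding each factor $(t-1)^{|C|}$ you are summing signed terms $t^{\#\mathrm{marks}}$ over pairs (edge set $S$, set of marked vertices), and the required involution must reverse sign while preserving the number of marks exactly; the natural Whitney-style toggles (flip a smallest eligible edge) pair terms with different mark counts and so destroy the $t$-degree, which is precisely the difficulty you defer. Moreover, the ``conceptual'' decomposition $X_G=\sum_o F_o$ cannot be combined with $\phi$ term by term: each $F_o$ is only quasisymmetric, while $\phi$ is defined on $\Lambda$, so this decomposition provides no bridge between the two sides of the target identity. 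As it stands, your argument establishes only the $t=1$ shadow $\sum_j s_j=\sum_j\mathrm{sink}(G,j)$, i.e.\ Stanley's 1973 count of all acyclic orientations. Note that Stanley's own proof in \cite{Stanley95a} closes this step by a substantive additional input rather than a bare involution --- his expansion of $\omega X_G$ as a generating function over pairs consisting of an acyclic orientation and a compatible coloring, from which the sink statistic is extracted --- so at the point where your proposal stops, the real content of the theorem still remains to be supplied.
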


\begin{remark}
By taking $j=1$, it follows from the theorem that $c_n$ is non-negative.
\end{remark}

Stanley in~\cite{Stanley95a} showed that for $n\in\mathbb{N}$ and the unit interval order $P_n=\{\frac{i}{2}\}_{i=1}^n$, the corresponding $X_{\text{inc}(P_n)}$ is $e$-positive, while $e$-positivity for the UIOs $$P_{n,k}=\bigg\{\frac{i}{k+1}\bigg\}_{i=1}^n$$ with $k>1$ has not yet been proven. It was checked for small $n$ and some $k$ (see~\cite{Stanley95a}).

In this article, we give a new proof of Gasharov's theorem, which
presents a combinatorial interpretation of the $s$-coefficients in terms
of planar networks. Compared to Gasharov's proof,
it gives a clearer visual
illustration of the cancellation procedures and resembles the proof of
monomial positivity of Schur functions using Lindström–Gessel–Viennot
Lemma~\cite{Fulmek10}. This allows us to look at the positivity
problematics from a slightly different perspective: instead of working with
the chromatic symmetric function of a graph directly, we analyze
families of $G$-symmetric functions, described in Section~\ref{Ghom}, first time proposed by Stanley in~\cite{Stanley95b}.

Next, we introduce {\em correct sequences} (abbreviated as {\em corrects}),
defined below. These play a major role in the article.

\begin{defn}
Let U be a UIO.  We will call a sequence $\vs = (w_1,\dots, w_k)$ of elements of $U$ {\em
   correct} if
 \begin{itemize}
 \item 
$w_i\not\succ w_{i+1}$ for $i=1,2,\dots,k-1$ 
\item and for
 each $j=2,\dots,k$, there exists $i<j$ such that $w_i\not\prec w_j$.
 \end{itemize}
\end{defn}
Every sequence of length 1 is correct, and sequence $(w_1,w_2)$ is
correct precisely when $w_1\sim w_2$. The second condition (supposing that the first one holds) may be reformulated as follows:
for each $j=1,\dots k$, the subset $\{w_1,\dots,w_j\}\subset U$ is connected
with respect to the graph structure~${(U,\sim)}$.
Using this notation, we prove the following theorems.
\begin{thm}\label{eposn} 
Let $X_{\text{inc}(U)}=\sum\limits_{\lambda}c_\lambda e_\lambda$ be a chromatic symmetric function of the $n$-element unit interval order $U$. Then $c_n$ is equal to the number of corrects of length $n$, in which every element of $U$ is used exactly once.
\end{thm}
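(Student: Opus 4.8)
The plan is to use the sink formula (Stanley's theorem above) to reinterpret $c_n$ and then build an explicit bijection. Since the only partition of $n$ of length $1$ is $(n)$, that theorem gives $c_n=\mathrm{sink}(G,1)$, where $G:=\mathrm{inc}(U)$, i.e. the number of acyclic orientations of $G$ with exactly one sink. (If $G$ is disconnected both sides vanish: no ordering can have all prefixes connected, and no orientation of a disconnected graph has a single sink; so I assume $G$ connected.) To each ordering $(w_1,\dots,w_n)$ of $V(G)$ I associate the acyclic orientation $\theta$ obtained by directing every edge from its later to its earlier endpoint. The goal is to show that this map restricts to a bijection from the length-$n$ corrects onto the one-sink acyclic orientations.

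The first step is a clean lemma valid for an arbitrary finite graph: $\theta$ has a unique sink if and only if $\{w_1,\dots,w_j\}$ spans a connected subgraph for every $j$, and in that case the sink is $w_1$. Indeed $w_1$ is always a sink, as every edge at $w_1$ points into it; if some $w_i$ with $i\ge 2$ is a sink then all its neighbours occur later, so $w_i$ is isolated in $\{w_1,\dots,w_i\}$, which is therefore disconnected; conversely, if $\{w_1,\dots,w_i\}$ is first disconnected at step $i$, then $w_i$ has no earlier neighbour, hence all its neighbours are later and $w_i$ is a second sink. This says precisely that the \emph{second} condition defining a correct sequence is equivalent to $\theta$ having a single sink. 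Consequently the map sends the orderings satisfying the connectivity condition onto the one-sink orientations, and the fibre over a one-sink orientation consists of all orderings inducing it, i.e. the linear extensions of the reversed orientation; every element of such a fibre automatically satisfies the connectivity condition.

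It remains to show that within each fibre exactly \emph{one} ordering also satisfies the first condition $w_i\not\succ w_{i+1}$; summing over all one-sink orientations then gives $\#\{\text{corrects}\}=\mathrm{sink}(G,1)=c_n$. Fixing a real representation with $u$-values, the first condition reads $u_{w_{i+1}}>u_{w_i}-1$ for all $i$: consecutive entries may never drop by a full unit, while incomparable (adjacent) or ascending consecutive pairs are always allowed. I would establish existence by the greedy rule that, among the currently available vertices (the sinks of the orientation restricted to the not-yet-listed vertices, equivalently the minimal remaining elements of the reversed order), always appends the one of smallest $u$-value; and uniqueness by an exchange argument on the first position at which two admissible linear extensions differ.

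The heart of the matter — and the step I expect to be the main obstacle — is exactly this fibrewise uniqueness: for a general graph the number of corrects need not equal $\mathrm{sink}(G,1)$, so the argument must genuinely use that $U$ is a unit interval order. The delicate point is to verify that the greedy smallest-$u$ choice is never forced into a unit-size drop and that no other choice survives the first condition; here I would invoke the semiorder structure guaranteed by Scott--Suppes (Theorem~\ref{S_S}), namely that neighbourhoods are intervals in the $u$-order and that the $(2+2)$- and $(3+1)$-free conditions propagate comparabilities (if $a\prec b$, every vertex with larger $u$-value than $b$ lies above $a$, and every vertex with smaller $u$-value than $a$ lies below $b$). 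Controlling, via these propagation rules, how the sinks of the restricted orientation sit relative to the $u$-order is what pins the choice down to a single admissible extension, and carrying this out carefully is the crux of the proof.
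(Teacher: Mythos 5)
Your route is genuinely different from the paper's, so it is worth saying what each buys before naming the gap. The paper never mentions acyclic orientations in its proof: it obtains Theorem~\ref{eposn} from Theorem~\ref{Ppos} via the $G$-homomorphism machinery of Section~\ref{Ghom} --- since $m_{(n)}=p_{(n)}$ as symmetric functions, $c_n=[v_1\cdots v_n]\,m^U_n=[v_1\cdots v_n]\,p^U_n$, and $p^U_n$ is computed by expanding the determinant formula in the $e^U$-basis as a signed sum over multipaths on the planar grid $\Gamma_U$, then cancelling everything except corrects with two sign-reversing involutions (the switch $\delta_z$ on $\overline{I}_k$, then the maps $\chi,\psi$ between $A_k$ and $B_k$). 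You instead invoke Stanley's sink theorem, $c_n=\mathrm{sink}(G,1)$, and propose a bijection between corrects and one-sink acyclic orientations; this is essentially the Stanley--Chow route, which the paper explicitly cites as prior work and deliberately bypasses because it wants a visible cancellation scheme and the stronger conclusion $p^U_k\in\mathbb{N}[U]$ for \emph{all} $k$, not just the squarefree coefficient. Your reductions up to the crux are sound: the lemma ``unique sink if and only if every prefix is connected'' is correct, the fibre over a one-sink orientation is exactly the set of its reverse-topological orderings, and all of these have connected prefixes. (One caution on wording: the second condition in the definition of a correct is equivalent to prefix-connectivity only \emph{in the presence of} the first condition, as the paper notes after the definition; your final set-level identification is still right because you impose the first condition inside each fibre anyway.)

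The gap is the one you flag yourself: fibrewise existence and uniqueness of a drop-free linear extension is asserted with a plan (greedy plus exchange), not proved, and it is the entire content of the theorem once the sink formula is invoked. Existence is in fact short, for a reason your sketch does not record: at every step the available vertices (sinks of $\theta$ restricted to the unplaced vertices) are pairwise nonadjacent in $G$, hence pairwise comparable, hence a chain in $U$; if $c$ is the minimum of that chain, then after placing $c$ every vertex available at the next step is either a leftover chain element (so $\succ c$) or newly freed, i.e.\ an in-neighbour of $c$ (so $\sim c$), and in both cases $c\not\succ$ it --- so by induction the minimum-available extension never drops, and this uses nothing beyond $G$ being an incomparability graph. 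Uniqueness is the real crux, and the exchange argument as sketched does not close. Concretely: suppose a drop-free extension first skips the available minimum $c$ in favour of some $y\succ c$; since $c$ stays available, follow the sequence to the step just before $c$ is finally placed. Because each consecutive descent is strictly less than one unit, the last element $z$ placed with $u$-value dipping below $u_c+1$ satisfies $u_c<u_z<u_c+1$, so $z\sim c$ with the edge necessarily oriented $z\to c$ --- which is perfectly consistent with all the constraints, so no contradiction is reached at this level; one must use the unit-interval structure (and plausibly the one-sink hypothesis) more globally. Small-case checks suggest the uniqueness claim is true, so your strategy is salvageable, but as written the proposal proves the easy half of its own key lemma at best and does not constitute a proof of the theorem, whereas the paper's grid-and-involution argument is self-contained.
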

\begin{cor}
Let $X_{\text{inc}(P)}=\sum\limits_{\lambda}c_\lambda e_\lambda$ be a chromatic symmetric function of  $n$-element $(3+1)$-free poset $P$, then $c_n$ is a nonnegative integer.
\end{cor}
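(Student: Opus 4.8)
The plan is to reduce the general $(3+1)$-free case to the unit interval order case settled by Theorem~\ref{eposn}, via the Guay-Paquet reduction (Theorem~\ref{G_P}), and to handle integrality separately by a standard change-of-basis argument.

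First I would observe that $c_n$ is automatically an integer. Indeed, $X_{\text{inc}(P)}=\sum_{c}x^{c}$ has nonnegative integer coefficients in the monomial basis $\{m_\lambda\}$, and since both $\{m_\lambda\}$ and $\{e_\lambda\}$ are $\mathbb{Z}$-bases of the ring of symmetric functions with integer coefficients (the transition matrix between them is unimodular), every coefficient $c_\lambda$, and in particular $c_n=c_{(n)}$, lies in $\mathbb{Z}$. It therefore remains only to prove $c_n\geq 0$.

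Next I would apply Theorem~\ref{G_P} to write $X_{\text{inc}(P)}=\sum_{i}\alpha_i X_{\text{inc}(P'_i)}$ as a convex combination, with $\alpha_i\geq 0$ and $\sum_i\alpha_i=1$, where each $P'_i$ is both $(3+1)$- and $(2+2)$-free, hence a UIO by Theorem~\ref{S_S}. I would then extract the coefficient of $e_n=e_{(n)}$ on both sides. Since $X_{\text{inc}(P'_i)}$ is homogeneous of degree $|P'_i|$, any summand with $|P'_i|\neq n$ contains no $e_{(n)}$ term and contributes $0$; for the summands with $|P'_i|=n$, the coefficient of $e_n$ is exactly the quantity $c_n(P'_i)$ appearing in Theorem~\ref{eposn}. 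This yields $c_n(P)=\sum_i\alpha_i\,c_n(P'_i)$, where the sum effectively ranges over $n$-element UIOs.

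Finally, Theorem~\ref{eposn} identifies each $c_n(P'_i)$ with the number of corrects of length $n$ in the $n$-element UIO $P'_i$ that use every element exactly once, which is a nonnegative integer; hence $c_n(P)$, being a nonnegative combination of such numbers, satisfies $c_n(P)\geq 0$. Together with the first step this shows $c_n$ is a nonnegative integer. I expect no genuine obstacle here: the corollary is a formal consequence of Theorem~\ref{eposn} and Guay-Paquet's reduction, the only point needing care being the homogeneity bookkeeping that confines the $e_n$-coefficient to $n$-element UIOs. All the real difficulty is contained in the proof of Theorem~\ref{eposn} itself.
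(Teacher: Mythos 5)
Your proposal is correct and follows essentially the same route as the paper, which likewise deduces the corollary from Theorem~\ref{eposn} combined with the Guay--Paquet convex-combination reduction (Theorem~\ref{G_P}). Your write-up is in fact slightly more careful than the paper's one-line argument: the homogeneity bookkeeping and, especially, the separate integrality step (via the unimodular change of basis between $\{m_\lambda\}$ and $\{e_\lambda\}$) address a point the paper leaves implicit, since a convex combination of nonnegative integers is a priori only a nonnegative rational.
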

Indeed, positivity for the general case follows from Theorem~\ref{G_P}, which presents the chromatic symmetric function of a $(3+1)$-free poset as a convex combination of the chromatic symmetric functions of unit interval orders.

Stanley~\cite{Stanley95b} and Chow~\cite{Chow95} showed the positivity of $c_n$ for $(3+1)$-free posets using combinatorial techniques, and linked $e$-coefficients with the acyclic orientations of the incomparability graphs. Nevertheless, their proofs do not give visual interpretation of the cancellation procedures. 


The article is structured as follows: in Section~\ref{Ghom}, we describe
the $G$-homomorphism introduced by Stanley in~\cite{Stanley95b}, which
is essential for our approach. The new proof of Gasharov's theorem (Theorem~\ref{sposthm}) is presented
in Section~\ref{Gashproof}. The proof of Theorem~\ref{eposn} and positivity of $G$-power sum symmetric functions is can be found in Section~3.2.

\bigskip

\textbf {Acknowledgements.}  I would like to express my deep gratitude
to my advisor, Andras Szenes, for introducing me to the subject, for
his guidance and help. I am grateful to Richard Rimanyi, Emanuele Delucchi and Bart Vandereycken for helpful discussions.

\section{Stanley's $G$-homomorphism}\label{Ghom}
For a graph $G$, Stanley \cite[p.~6]{Stanley95b} defined $G$-analogues of the standard families of symmetric functions. Let $G$ be a finite graph with vertex set $V(G)=\{v_1,...,v_n\}$ and edge set $E(G)$. We will think of the elements of $V(G)$ as commuting variables.

\begin{defn} \label{eG}
For a positive integer $i$, $1\leq i, \leq n$, we define the \em $G$-analogues \normalfont of the elementary symmetric polynomials, or \em the elementary $G$-symmetric polynomials\normalfont, as follows
 $$e_i^G =\sum\limits_{\substack{\#S=i\\
     S-\mathrm{stable}}}\prod\limits_{v\in S}v,$$ where the sum is
 taken over all $i$-element subsets $S$ of $V$, in which no two
 vertices form an edge, i.e. stable subsets. We set $e_0^G=1$, and $e_i^G=0$ for $i<0$.
\end{defn}
Note that these polynomials are not necessarily symmetric.

Let $\Lambda_G\subset\mathbb{R}[v_1,...,v_n]$ be the subring generated
by $\{e_i^G\}_{i=1}^{n}$.  The map $e_i\mapsto e_i^G$ extends to a
ring homomorphism $\phi_G: \Lambda\rightarrow\Lambda_G$, called the
{\em $G$-homomorphism}. For $f\in \Lambda$, we will use the notation $f^G$ for $\phi_G(f)$.

\begin{exmp}
Given a partition $\lambda = \lambda_1\geq \lambda_2\geq...\geq\lambda_k,\ k\in \mathbb{N},$ we have $$e_{\lambda}^G = \prod\limits_{i=1}^k e_i^G,$$
$$s_{\lambda}^G=\mathrm{det}(e_{\lambda_i^*+j-i}^G).$$
\end{exmp}

For an integer function $\alpha: V\rightarrow \mathbb{N}$ and $f^G\in\Lambda_G$, let
$$v^\alpha = \prod\limits_{v\in V}v^{\alpha(v)},$$ 
 and $[v^\alpha]f^G$ stands for the coefficient of $v^\alpha$ in the polynomial $f^G\in\Lambda_G$.

Let $G^\alpha$ denote the graph, obtained by replacing every vertex $v$ of $G$ by the complete subgraph of size $\alpha(v)$: $K_{\alpha(v)}^v$. Given vertices $u$ and $v$ of $G$, a vertex of $K_{\alpha(v)}^v$ is connected to a vertex of $K_{\alpha(u)}^u$ if and only if $u$ and $v$ form an edge in $G$.

Considering the Cauchy product \cite[ch.~4.2]{Macdonald79}, Stanley \cite[p.~6]{Stanley95b} found a connection between the $G$-analogues of symmetric functions and $X_G$. Following Stanley~\cite{Stanley95b}, we set
$$T(x,v) = \sum\limits_\lambda m_\lambda(x)e^G_\lambda(v),$$
where the sum is taken over all partitions. Then 

\begin{equation}\label{gnechrom}
[v^\alpha]T(x,v)\prod\limits_{v\in V}\alpha(v)! =X_{G^\alpha}.
\end{equation}
Using the Cauchy identity 
$$\sum\limits_\lambda s_\lambda(x)s_{\lambda^*}(y)=\sum\limits_\lambda m_\lambda(x)e_\lambda(y) = \sum\limits_\lambda e_\lambda(x)m_\lambda(y)$$
and applying the $G$-homomorphism, one obtains:
\begin{equation}\label{GCauchy}
T(x,v) = \sum\limits_\lambda m_\lambda(x)e^G_\lambda(v) = \sum\limits_\lambda s_\lambda(x)s^G_{\lambda^*}(v)=T(v,x) = \sum\limits_\lambda e_\lambda(x)m^G_\lambda(v).
\end{equation}

An immediate consequence of the formulas \eqref{gnechrom} and
\eqref{GCauchy} is the following result of Stanley:
\begin{thm}[Stanley]\label{poscrit}
 For every finite graph G
\begin{enumerate}
\item $X_{G^\alpha}$ is s-positive for every
  $\alpha:V(G)\rightarrow\mathbb{N}$ if and only if $s_\lambda^G\in
  \mathbb{N}[V(G)]$ for every partition $\lambda$.
\item $X_{G^\alpha}$ is e-positive for every $\alpha:V(G)\rightarrow\mathbb{N}$ if and only if $m_\lambda^G\in \mathbb{N}[V(G)]$ for every partition $\lambda$. 
\end{enumerate}
\end{thm}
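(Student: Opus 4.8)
The plan is to read both equivalences directly off the three expansions of $T(x,v)$ in~\eqref{GCauchy}, combined with the extraction formula~\eqref{gnechrom}; the two items will be proved by the identical argument, once with the Schur basis of $\Lambda$ in the $x$-variables and once with the elementary basis. First I would record the grading fact that keeps everything finite: each summand $m_\lambda(x)\,e^G_\lambda(v)$ is homogeneous of degree $|\lambda|$ in the $x$-variables and of degree $|\lambda|$ in the $v$-variables, so for a fixed exponent function $\alpha$ only the partitions $\lambda$ with $|\lambda|=\sum_{v\in V}\alpha(v)$ contribute to $[v^\alpha]T(x,v)$. Hence every coefficient extraction below is a finite, well-defined operation, and the two extractions — the coefficient of $v^\alpha$ and the coefficient in the $x$-basis — act on disjoint sets of variables and therefore commute.

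For item (1) I would use the middle identity $T(x,v)=\sum_\lambda s_\lambda(x)\,s^G_{\lambda^*}(v)$. Applying $[v^\alpha]$ to each side and then multiplying by $\prod_{v\in V}\alpha(v)!$, formula~\eqref{gnechrom} gives
$$X_{G^\alpha}=\Big(\prod_{v\in V}\alpha(v)!\Big)\sum_\lambda \big([v^\alpha]s^G_{\lambda^*}(v)\big)\,s_\lambda(x),$$
so the coefficient of $s_\lambda(x)$ in $X_{G^\alpha}$ is $\big(\prod_{v\in V}\alpha(v)!\big)\,[v^\alpha]s^G_{\lambda^*}(v)$. Since $\prod_{v\in V}\alpha(v)!>0$, this coefficient is nonnegative exactly when $[v^\alpha]s^G_{\lambda^*}(v)\ge 0$.

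Next I would run the quantifiers. By definition $X_{G^\alpha}$ is $s$-positive for every $\alpha$ iff $[v^\alpha]s^G_{\lambda^*}(v)\ge 0$ for all $\alpha$ and all $\lambda$. As $\alpha$ ranges over all exponent functions the monomials $v^\alpha$ exhaust the monomials in $v_1,\dots,v_n$, and as $\lambda$ ranges over all partitions so does the conjugate $\mu=\lambda^*$; thus the condition becomes: every coefficient of every $s^G_\mu(v)$ is nonnegative. Because the $e^G_i$, and hence the determinant $s^G_\mu=\det(e^G_{\mu_i^*+j-i})$, already have integer coefficients, nonnegativity of all coefficients is precisely the statement $s^G_\mu\in\mathbb{N}[V(G)]$ for every $\mu$. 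This is item (1). For item (2) I would repeat the argument verbatim with the last identity $T(x,v)=\sum_\lambda e_\lambda(x)\,m^G_\lambda(v)$: the coefficient of $e_\lambda(x)$ in $X_{G^\alpha}$ equals $\big(\prod_{v\in V}\alpha(v)!\big)\,[v^\alpha]m^G_\lambda(v)$, and here no conjugation intervenes, so the same bookkeeping yields $e$-positivity of all $X_{G^\alpha}$ iff $m^G_\lambda\in\mathbb{N}[V(G)]$ for every $\lambda$.

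I do not expect a genuine obstacle, since the result is essentially a change of basis; the one point needing care is the quantifier exchange, namely that letting $\alpha$ run over all exponent functions really recovers every coefficient of $s^G_\mu$ and $m^G_\mu$. If one insists that $\alpha$ be strictly positive on every vertex, the same conclusion still follows: setting $v_j=0$ for $j\notin V'$ carries $e^G_i$ to $e^{G[V']}_i$ and hence $s^G_\mu$ to $s^{G[V']}_\mu$ (and likewise for $m^G_\mu$), so the coefficients of monomials not of full support are recovered by applying the full-support case to the induced subgraphs $G[V']$.
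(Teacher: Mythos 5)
Your main argument is correct and is exactly the paper's own proof: the paper presents Theorem~\ref{poscrit} as an immediate consequence of the formulas \eqref{gnechrom} and \eqref{GCauchy}, and your write-up simply makes explicit the coefficient extraction, the irrelevance of the positive factor $\prod_{v}\alpha(v)!$, and the quantifier bookkeeping (for the $e$-basis case this is also what the paper records in its remark that $c^\alpha_\lambda$ equals $[v^\alpha]m^G_\lambda$ up to that factorial factor).

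The one flaw is your final paragraph, which treats the case where $\alpha$ is required to be strictly positive. The patch as stated is circular: setting $v_j=0$ for $j\notin V'$ does carry $s^G_\mu$ to $s^{G[V']}_\mu$, but ``applying the full-support case to $G[V']$'' requires knowing that all full-support inflations of $G[V']$ have $s$-positive chromatic symmetric functions, and those graphs are precisely the inflations $G^\alpha$ with $\alpha\equiv 0$ off $V'$ --- exactly the ones a strictly-positive hypothesis excludes. The gap is not merely pedantic: $s^G_\mu$ is homogeneous of degree $|\mu|$ in the $v$-variables, so whenever $|\mu|<\#V(G)$ it contains no full-support monomial at all, and a strictly-positive hypothesis gives no direct information about such $s^G_\mu$ whatsoever. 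Fortunately the patch is unnecessary under the intended convention (Stanley's, which the paper follows), in which $\mathbb{N}$ contains $0$: then $G^\alpha$ with $\alpha(v)=0$ simply deletes $v$, formula \eqref{gnechrom} still holds, and your main argument already reaches every coefficient of every $s^G_\mu$ and every $m^G_\lambda$.
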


\begin{remark}\label{c_m}
If $X_{G^\alpha}=\sum\limits_{\lambda}c^\alpha_{\lambda}e_\lambda,$ then $c_\lambda^\alpha=[v^\alpha]m^G_\lambda.$ Hence, monomial positivity of $m^G_\lambda$ is equivalent to the positivity of $c_\lambda^\alpha$ for every $\alpha$.
\end{remark}

The proofs of positivity of $G$-power sum symmetric functions and  Schur $G$-symmetric functions for the case of unit interval orders can be found in~\cite{Paunov16}.

\section{Proofs of the theorems}\label{proofs}

It follows from Theorem \ref{poscrit} that to prove that the graph $G$ is $s$-positive, it is enough to show the monomial positivity of its $G$-Schur polynomials. On the other hand, Guay-Paquet (Theorem~\ref{G_P}) showed that it is sufficient to check $s$-positivity for unit interval orders in order to prove it for the general case of $(3+1)$-free posets. Therefore, in the following paragraph~\ref{Gashproof}, we analyze the functions $s_{\lambda}^G$ for the case $G=\text{inc}(U),$ where $U$ is UIO.

\subsection{A new proof of Gasharov's theorem}\label{Gashproof}

Given unit interval order $U$, we arrange the elements of $U$ according to their order on the real line. For instance, the incomparability graph of $U_8=\{v_i=\frac{i}{2}\}_{i=1}^8$, the 1-chain graph with 8 vertices, has the following labeling:

\begin{figure}[H]
\hspace*{+0.5cm} 
\begin{tikzpicture}
	[inner sep=2mm,
	place/.style={circle,draw=blue!50,fill=blue!20,thick},
	transition/.style={rectangle,draw=black!50,fill=black!20,thick}]

	\trimbox{0cm 1.5cm 0cm 1cm}{

		\foreach \x in {1,2,...,6}{
			\draw [-,line width=3pt] (2*\x-2,0)-- ($(2*\x,0)+(2,0)$) {};
		}

		\foreach \x in {1,2,...,8}{
			\node at ( 2*\x-2,0) [draw,circle, fill=blue!20, label=above:\LARGE $v_{\x}$] {};

		}
		
}
\end{tikzpicture}
\medskip
\caption{The incomparability graph of $U_8$.}
\label{P_8_1}
\end{figure}
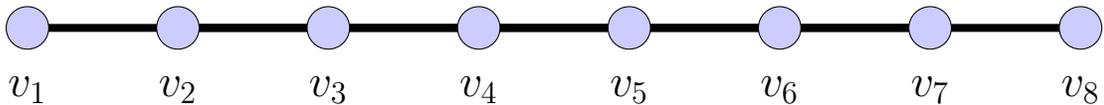

A key tool in our work is the Lindström–Gessel–Viennot
lemma~\cite{Fulmek10}.  Let $\Gamma$ be a finite directed
acyclic (i.e. without directed cycles) graph with  set of vertices $V(\Gamma)$ and set of
edges $E(\Gamma)$. Let $w:E(\Gamma)\rightarrow R$
be a weighting of the edges with values in some commutative ring $R$.  For
every directed path $\rho$, denote by $w(\rho)$ the product of the
weights of the edges in the path. Then, for every two vertices $a$ and
$b$ of $\Gamma$, let $$e(a,b)=\sum\limits_{\rho:a\rightarrow b} w(\rho),$$ where
the sum is taken over all paths from $a$ to $b$.

\begin{defn}
Let $n\in\mathbb{N},$ and let us fix two ordered
$n$-element subsets $$A=(a_1,a_2,..., a_n)\subset V(\Gamma)\
\text{and}\ B=(b_1,b_2,..., b_n)\subset V(\Gamma),$$ called {\em base} and
{\em destination} vertices, correspondingly.
  We will call a collection $\vec{\rho}=(\rho_1,...,\rho_n)$ of paths in
  $\Gamma$ a \em multipath \normalfont from $A$ to $B$ if there is a
  permutation $\sigma$ on $\{1,2,...,n\}$ such that
  $\rho_i:a_i\rightarrow b_{\sigma(i)},\ i=1,...,n.$ Given a multipath $\vec{\rho}$, we
  denote the permutation $\sigma$ by $\sigma_{\vec{\rho}}$. We call $\vec{\rho}$ \em
  non-intersecting, \normalfont if $\rho_i\cap \rho_j = \emptyset$ for
  $i\not= j.$
\end{defn}

\begin{thm}[Lindström–Gessel–Viennot]\label{L_G_V}
Let $\Gamma,$ $w:E(\Gamma)\rightarrow R$ be a weighted locally finite acyclic graph as above, $n\in\mathbb{N},$ $A=(a_1,a_2,..., a_n)\subset V(\Gamma)\
\text{and}\ B=(b_1,b_2,..., b_n)\subset V(\Gamma).$ Define  the
matrix
\[ M_{A,B}= \left( \begin{array}{cccccc}
e(a_1,b_1)      & e(a_1,b_2)  & e(a_1,b_3)& \dots  & e(a_1,b_n) \\
e(a_2,b_1)      & e(a_2,b_2)  & e(a_2,b_3)& \dots  & e(a_2,b_n) \\
\hdotsfor{5} \\
e(a_n,b_1)      & e(a_n,b_2)  & e(a_n,b_3)& \dots  & e(a_n,b_n) \\
\end{array} \right)\]
Then, the following equality holds in the ring $R$:
$$\mathrm{det}(M_{A,B}) = \sum\limits_{\substack{\vec{\rho}:A\rightarrow B\\ \text{non-int.}}} \mathrm{sign}(\sigma_{\vec{\rho}})\cdot\prod\limits_{i=1}^n w(\rho_i),$$
where the sum is taken over all non-intersecting multipaths.
\end{thm}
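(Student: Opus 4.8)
The plan is to prove the identity by a sign-reversing involution on the set of all multipaths, in the spirit of the standard combinatorial proof of the Lindström–Gessel–Viennot lemma. First I would expand the determinant by the Leibniz formula,
\[
\det(M_{A,B}) = \sum_{\sigma} \mathrm{sign}(\sigma) \prod_{i=1}^n e(a_i, b_{\sigma(i)}),
\]
where $\sigma$ ranges over all permutations of $\{1,\dots,n\}$. Substituting the definition $e(a_i,b_{\sigma(i)}) = \sum_{\rho_i:a_i\to b_{\sigma(i)}} w(\rho_i)$ and multiplying out over $i$, each resulting term corresponds to a choice of one path $\rho_i:a_i\to b_{\sigma(i)}$ for every $i$, i.e. to a multipath $\vec{\rho}=(\rho_1,\dots,\rho_n)$ with $\sigma_{\vec{\rho}}=\sigma$. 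This gives
\[
\det(M_{A,B}) = \sum_{\vec{\rho}:A\to B} \mathrm{sign}(\sigma_{\vec{\rho}}) \prod_{i=1}^n w(\rho_i),
\]
where the sum now runs over \emph{all} multipaths, intersecting or not. It therefore suffices to show that the contributions of the intersecting multipaths cancel among themselves, leaving exactly the non-intersecting terms claimed in the statement.

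To produce this cancellation I would construct a fixed-point-free, sign-reversing involution $\Phi$ on the set of intersecting multipaths. Given such a $\vec{\rho}$, I fix canonical collision data: let $i$ be the smallest index for which $\rho_i$ meets another path, let $v$ be the first vertex along $\rho_i$ (in the orientation from $a_i$) that also lies on some other path, and let $j$ be the smallest index $j\ne i$ with $v\in\rho_j$. The map $\Phi$ swaps the tails of $\rho_i$ and $\rho_j$ after $v$: the new $\rho_i'$ follows $\rho_i$ up to $v$ and then the portion of $\rho_j$ beyond $v$, and symmetrically for $\rho_j'$, with all other paths unchanged. Since $\Phi$ merely redistributes edges already present, it preserves the edge multiset and hence $\prod_i w(\rho_i)$; since it exchanges the endpoints $b_{\sigma(i)}$ and $b_{\sigma(j)}$, it multiplies $\sigma_{\vec{\rho}}$ by a transposition and so flips $\mathrm{sign}(\sigma_{\vec{\rho}})$. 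Paired terms then cancel, and the non-intersecting multipaths, on which $\Phi$ is undefined, are precisely the survivors, yielding the asserted formula.

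The main obstacle is verifying that $\Phi$ is genuinely an involution, which is exactly where the canonical choices and acyclicity must be checked with care. After the tail-swap at $v$, one must confirm that the new multipath is still intersecting and carries the \emph{same} distinguished data $(i,v,j)$, so that a second application of $\Phi$ undoes the swap. Acyclicity enters decisively here: because each path is simple, the segment of $\rho_i'$ preceding $v$ coincides with that of $\rho_i$ and can acquire no new collision (a vertex shared with the rearranged $\rho_j'$ would force either an old collision before $v$ or a repeated vertex on a single path, both impossible), so $v$ remains the first collision on $\rho_i'$ and $i$ remains minimal; moreover the minimality of $i$ forces $j>i$, and the set of indices of paths through $v$ is unchanged, so $j$ is still the smallest partner. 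Establishing this invariance of $(i,v,j)$ gives $\Phi\circ\Phi=\mathrm{id}$, and with it the theorem.
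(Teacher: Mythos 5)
Your proof is correct, and it is the standard sign-reversing tail-swap involution argument; the paper itself does not prove Theorem~\ref{L_G_V} but quotes it as a known result with a reference to Fulmek~\cite{Fulmek10}, whose expository proof proceeds exactly as yours does (Leibniz expansion, then cancellation of intersecting multipaths via a canonically chosen crossing, with acyclicity guaranteeing simplicity of paths and hence the involution property). Your verification that the distinguished data $(i,v,j)$ is preserved under the swap is precisely the delicate point of that classical argument, and you handle it correctly.
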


\begin{remark}\label{c_m}
It follows from Theorem~\ref{G_P} that to prove Gasharov's theorem, it is sufficient to verify it for unit interval orders. Here we prove Gasharov's theorem for this case. 
\end{remark}

\begin{thm}\label{sgmpos}
  Let $(U,\succ)$ be a unit interval order, $G=\mathrm{inc}(U)$ its
  incomparability graph. Then, for every partition $\lambda$,
  $s_\lambda^G \in \mathbb{N}[V(G)]$.
\end{thm}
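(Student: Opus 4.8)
The plan is to prove this by exhibiting a planar acyclic network $\Gamma$ associated to the unit interval order $U$, so that the Schur $G$-polynomial $s_\lambda^G = \det(e^G_{\lambda_i^*+j-i})$ is exactly the determinant $\det(M_{A,B})$ appearing in the Lindström–Gessel–Viennot lemma (Theorem~\ref{L_G_V}), and then to argue that the sign-reversing cancellation forces every non-intersecting multipath to be \emph{non-crossing} (i.e.\ to have $\sigma_{\vec\rho}=\mathrm{id}$), so that every surviving term carries a $+1$ sign and a monomial with nonnegative coefficients. The first and most important step is the \emph{construction of the network}. I would build a directed graph whose vertices are arranged in a grid: one axis indexed by the elements of $U$ ordered along the real line (as in Figure~\ref{P_8_1}), and the other axis indexed by ``levels'' corresponding to the rows of the determinant. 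The edge weights should be chosen from $\{v : v\in V(G)\}\cup\{1\}$ so that the path-sum $e(a_i,b_j)$ equals the $(i,j)$ entry $e^G_{\lambda_i^*+j-i}$. Concretely, a path across the network from a base vertex to a destination vertex should record a choice of a \emph{stable set} of the required size, with the vertical structure of the grid encoding the UIO incomparability relation $\sim$; the weight of the path is then the product of the chosen vertices. The key combinatorial point is that in a UIO, stable sets of size $m$ correspond to strictly increasing (in the real-line order) selections with gaps at least $1$, which is precisely the kind of monotone data that planar non-crossing paths encode.

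Once the network realizes the matrix $M_{A,B}$ with $\det(M_{A,B})=s_\lambda^G$, I would invoke Theorem~\ref{L_G_V} to write
\[
s_\lambda^G=\sum_{\substack{\vec\rho:A\to B\\ \text{non-int.}}}\mathrm{sign}(\sigma_{\vec\rho})\prod_{i=1}^n w(\rho_i).
\]
The second main step is then to verify that the base set $A$ and destination set $B$ are \emph{compatibly ordered} along the outer boundary of the planar drawing, so that any two paths $\rho_i,\rho_j$ with $i<j$ going to $b_{\sigma(i)},b_{\sigma(j)}$ with $\sigma(i)>\sigma(j)$ are forced to intersect. This is the standard planarity argument for LGV: in a genuinely planar acyclic network with $a_1,\dots,a_n$ and $b_1,\dots,b_n$ in ``non-permuting'' positions on the boundary, every non-intersecting multipath must have $\sigma_{\vec\rho}=\mathrm{id}$, hence $\mathrm{sign}(\sigma_{\vec\rho})=+1$. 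Therefore the determinant reduces to a sum of monomials $\prod_i w(\rho_i)$, each with coefficient $+1$, which manifestly lies in $\mathbb{N}[V(G)]$.

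The hard part — and the real content beyond the routine LGV boilerplate — will be the network construction itself: I need to verify that the path-sums reproduce exactly the entries $e^G_{\lambda_i^*+j-i}$, including the shifted indices $\lambda_i^*+j-i$ coming from the Jacobi–Trudi-type determinant in Definition~\ref{sfunc}, and that $e^G_m=0$ for $m<0$ and $e^G_0=1$ are correctly encoded (typically via ``forbidden'' versus ``free'' boundary vertices). The subtlety specific to this setting, as opposed to the classical Schur case, is that the weights are the \emph{noncommuting-looking but actually commuting} vertex variables $v\in V(G)$ rather than the variables $x_i$, and the stability constraint ``no two chosen vertices form an edge of $G$'' must be faithfully built into the geometry of the paths; in the UIO case this constraint is exactly an interval/separation condition that the planar layout can enforce by routing paths so that two paths pick incomparable elements only when those elements are non-adjacent. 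I would spend the bulk of the argument checking this dictionary on the running example $U_8$ of Figure~\ref{P_8_1} and then stating the general construction, after which the positivity conclusion follows formally from Theorem~\ref{L_G_V} together with the planarity-forces-identity observation.
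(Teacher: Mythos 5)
Your proposal follows essentially the same route as the paper's own proof: the paper constructs exactly the planar grid you describe (vertices $(i,j)$, downward edges of weight $1$, diagonal edges of weight $v_j$ jumping to $\mathrm{next}(j)$ so that each path records a stable set and $e(a_i,b_j)=e^G_{\lambda_i+j-i}$), applies the Lindström--Gessel--Viennot lemma with $a_i=(k+1-i,1)$ and $b_i=(k+1-i+\lambda_i,n+1)$, and uses planarity to force $\sigma_{\vec{\rho}}=\mathrm{id}$, giving the positive expansion of $s^G_{\lambda^*}$. The dictionary you flag as the hard part (stable sets $\leftrightarrow$ monotone paths with gaps $\geq 1$, and the shifted indices in the Jacobi--Trudi entries) is precisely what the paper's construction verifies, so your plan is correct and matches the paper's argument.
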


\begin{proof}

  We prove the monomial positivity of $s^G_{\lambda^*}$ by constructing
  a special directed graph $\Gamma_G$, the grid of $G$, and applying
  the Lindström–Gessel–Viennot theorem to $\Gamma_G$.

  The vertices of $\Gamma_G$ are given by the pairs $(i,j)$, where $i
  \in {1,...,n+1}$ and $j\in \mathbb{N}.$ Then, for every $i\in
  \{1,...,n\}$, we denote by $\mathrm{next}(i)= {\mathrm{min}}~\{j|\
  v_j\succ v_i\}$. If such $v_j$ does not exist, then we define
  $\mathrm{next}(i)=n+1$. From every vertex $(i,\ j),\ j<n+1,$ we draw a
  directed edge to the vertex $(i,\ j+1)$ with the weight 1, and a
  directed edge to $(i+1,\ \mathrm{next}(j))$ with the weight $v_i$.
  Note that $\Gamma_G$ is planar if $U$ is a unit interval order.

  For instance, for the graph $U_8$, mentioned above, the grid
  $\Gamma_{U_8}$ is as follows:

\begin{figure}[H]
\hspace*{-0.5cm} 
\begin{tikzpicture}[scale=0.9]

    \coordinate (Origin)   at (0,0);

    \coordinate (Bone) at (0,2);
   \coordinate (Btwo) at (2,-2);

 	\coordinate (Afour) at (-2,14);
	\coordinate (Athree) at (0,14);
	\coordinate (Atwo) at (2,14);
	\coordinate (Aone) at (4,14);
	
	\node[draw,circle,inner sep=5pt,fill,red!60] at (Afour) {};
	\node[draw,circle,inner sep=5pt,fill,red!60] at (Athree) {};
	\node[draw,circle,inner sep=5pt,fill,red!60] at (Atwo) {};
	\node[draw,circle,inner sep=5pt,fill,red!60] at (Aone) {};
	
	\node[coordinate, pin = {45:$a_1$=(4,1)}] at (Aone) {};
	\node[coordinate, pin = {45:$a_2$=(3,1)}] at (Atwo) {};
	\node[coordinate, pin = {45:$a_3$=(2,1)}] at (Athree) {};
	\node[coordinate, pin = {45:$a_4$=(1,1)}] at (Afour) {};

	\node[coordinate, pin = {135:(1,2)}] at (-2,12) {};

 	\coordinate (Dfour) at (2,-2);
	\coordinate (Dthree) at (6,-2);
	\coordinate (Dtwo) at (10,-2);
	\coordinate (Done) at (12,-2);

	\node[draw,circle,inner sep=5pt,fill,blue!60] at (Dfour) {};
	\node[draw,circle,inner sep=5pt,fill,blue!60] at (Dthree) {};
	\node[draw,circle,inner sep=5pt,fill,blue!60] at (Dtwo) {};
	\node[draw,circle,inner sep=5pt,fill,blue!60] at (Done) {};

	\node[coordinate, pin = {225:$b_1$=(8,9)}] at (Done) {};
	\node[coordinate, pin = {225:$b_2$=(7,9)}] at (Dtwo) {};
	\node[coordinate, pin = {225:$b_3$=(5,9)}] at (Dthree) {};
	\node[coordinate, pin = {225:$b_4$=(3,9)}] at (Dfour) {};

	\coordinate (Shift1) at (2,-2);
	\coordinate (Shift2) at (2,-4);

    \draw[style=help lines,dashed] (-2,-2) grid[step=2cm] (14,14);

    \foreach \x in {-1,0,...,5}{
      \foreach \y in {-1,0,...,7}{
        \node[draw,circle,inner sep=2pt,fill,] at (2*\x,2*\y) {};

      }
    }

    \foreach \x in {-1,-0,...,6}{
      \foreach \y in {0,1,...,7}{
        \node[draw,circle,inner sep=2pt,fill,] at (2*\x,2*\y) {};
	 \draw [-stealth,black!80,line width=2pt] (2*\x,2*\y)
        -- ($(2*\x,2*\y)+(0,-2)$)
	node[pos=0.4,left,black!, outer sep=-3pt] {\LARGE 1};
	
      }
    }


	\foreach \x in {-1,0,...,6}{
	
		\FPeval{\z}{clip(8-0)};
		 \draw [->>,black!80,line width=2pt] (2*\x,0) -- ($(2*\x,0)+(Shift1)$) node [black,midway,fill=white] {\LARGE $v_{\z}$};
	}

\foreach \x in {-1,0,...,6}{
	\foreach \y in {1,...,7}{
		\node[draw,circle,inner sep=2pt,fill,] at (2*\x,2*\y) {};
			\FPeval{\z}{clip(8-\y)};
			\draw [->>,black!80,line width=2pt] (2*\x,2*\y) -- ($(2*\x,2*\y)+(Shift2)$) node [black,midway,fill=white] {\LARGE $v_{\z}$ };
	}
}

\end{tikzpicture}
\caption{The grid $\Gamma_{U_8}.$}
\label{smaingrid}

\end{figure}
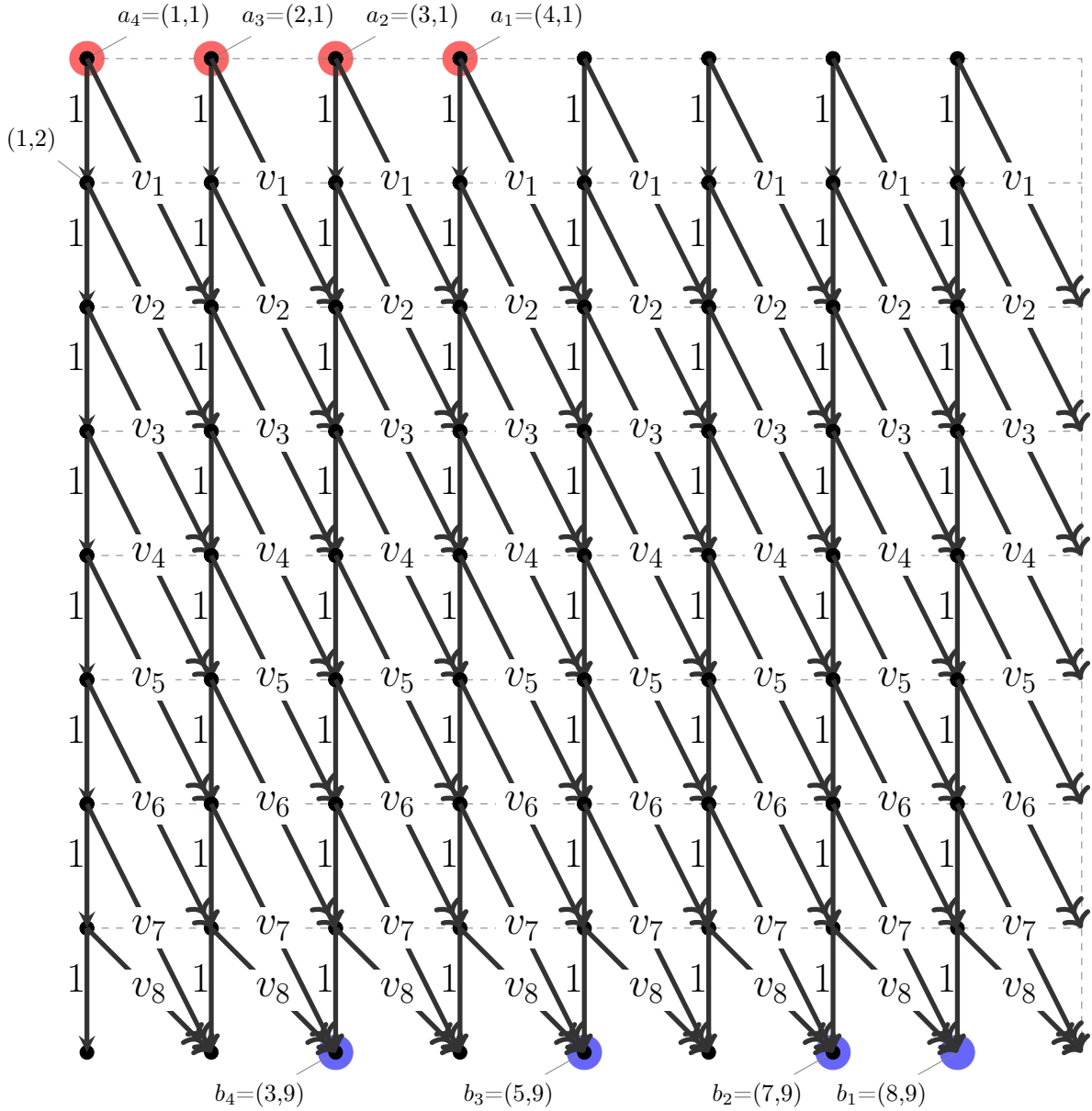

Here, the base vertices $A$, $(1,1), (2,1), (3,1),$ and $(4,1)$, are
on the top, and are colored in red. The destination vertices $B$,
$(3,9), (5,9), (7,9),$ and $(8,9)$, at the bottom, and are colored in
blue.

It easily follows from the definition of $\Gamma_G$ that, for positive
integers $i$ and $j$,  we have $$e( (i,1),\ (i+j,n+1) ) = e^G_j.$$

Note that we use the notation $e(a,b)$ for the sum over all weights of
paths from vertex $a$ to vertex $b$, and we use a similar notation
$e^G_{\lambda}$ for the $G$-elementary symmetric functions. This is
not a coincidence: for the graphs  we will consider in this
article and to which we apply Theorem~\ref{L_G_V}, $e(a,b)$ will turn out to be
the elementary $G$-symmetric function.

Now, let $k\in\mathbb{N}$, and fix a partition $\lambda_1\geq
\lambda_2\geq... \geq \lambda_k$.  Let $$A=\{a_1=(k,1),\
a_2=(k-1,1),..,a_i=(k+1-i,1),..,\ a_k =(1,1)\},$$
and $$B=\{b_1=(k+\lambda_1,n+1),\ b_2=(k-1+\lambda_2,n+1),...,\ b_i
=(k+1-i+\lambda_i,n+1)\,...,b_k=(\lambda_k+1,n+1)\}.$$ Then we have $$e( a_i,\
b_j) = e^G_{\lambda_i+j-i}.$$
The example of  $\lambda=(4,4,3,2)$ is shown on  Figure~\ref{smaingrid}.

Next, applying Theorem~\ref{L_G_V}, we obtain 
\begin{equation}\label{sposproof}
\mathrm{det}(e^G_{\lambda_i+j-i} ) = \mathrm{det}(e( a_i,\ b_j)) = \sum\limits_{\vec{\rho}:A\rightarrow B} \mathrm{sign}(\sigma_{\vec{\rho}})\prod\limits_{i=1}^n w(\rho_i),
\end{equation}
where the sum is taken over all non-intersecting multipaths from $A$ to $B$. 
The permutation $\sigma$ must be the identity permutation for all
possible non-intersecting multipaths $\vec{\rho}$ since the grid $\Gamma_G$ is planar. 
Thus, by the definition of $s_{\lambda}^G$, we have

\begin{equation}\label{sposresult}
s^G_{\lambda^*}=\mathrm{det}(e^G_{\lambda_i+j-i} ) = \sum\limits_{\substack{\vec{\rho}:A\rightarrow B \\ \text{non-intersecting}}}\prod\limits_{i=1}^n w(\rho_i),
\end{equation}
where the sum is taken over all non-intersecting multipaths from $A$ to $B$. This proves the monomial positivity of $s_{\lambda^*}^G.$
\end{proof}

\subsection{Monomial positivity of the $G$-power sum  functions.}


Let us repeat the definition of a central notion for our work, that of
{\em correct} sequences of elements of a unit interval order.
\begin{defn}
 Let $(U,\prec)$ be a unit interval order, and $G=\text{inc}(U)$. We will call a sequence $\vec{w} = (w_1,\dots, w_k)$ of elements of $U$ {\em
   correct} if
 \begin{itemize}
 \item 
$w_i\not\succ w_{i+1}$ for $i=1,2,\dots,k-1$ 
\item and for
 each $j=2,\dots,k$, there exists $i<j$ such that $w_i\not\prec w_j$.
 \end{itemize}
\end{defn}
 
We denote by $P^U_k$ the set of all correct sequences (abbreviated as  {\em
   corrects}) of length $k$. Since $G$ is uniqely defined by $U$, and we are working only with UIO, here and below we use the $U$-index instead of $G$. The $U$-analogues of symmetric functions will be analyzed.

\begin{thm}\label{Ppos}
Let $U$ be a unit interval order and $p_k^U$ the Stanley power-sum function of the corresponding incomparability graph.
Then, for
every natural $k$, we have $$p_k^U=\sum\limits_{\vec{w}\in P^U_k} w_1\cdot...\cdot w_k\ \in N[U],$$ where the sum is taken over all corrects of length $k$.
\end{thm}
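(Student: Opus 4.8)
The plan is to prove the formula by induction on $k$, using Newton's identities to reduce it to a sign-reversing cancellation argument. Since $p_k$ is a polynomial in $e_1,\dots,e_k$ via Newton's identity
\[
p_k=\sum_{j=1}^{k-1}(-1)^{j-1}e_j\,p_{k-j}+(-1)^{k-1}k\,e_k,
\]
and $\phi_G$ is a ring homomorphism with $e_j\mapsto e_j^G$, the same relation holds after applying $\phi_G$. Writing $Q_k:=\sum_{\vec w\in P^U_k}w_1\cdots w_k$, the base case $k=1$ is immediate, since every one-element sequence is correct and $Q_1=e_1^G=p_1^G$. Assuming $Q_{k'}=p_{k'}^G$ for all $k'<k$ and substituting into Newton's relation, the theorem reduces to the purely combinatorial homogeneous identity
\[
\sum_{j=0}^{k-1}(-1)^{j}\,e_j^G\,Q_{k-j}+(-1)^{k}k\,e_k^G=0,\qquad e_0^G=1,
\]
since this rearranges to $Q_k=\sum_{j=1}^{k-1}(-1)^{j-1}e_j^G Q_{k-j}+(-1)^{k-1}k\,e_k^G=p_k^G$. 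I would prove this identity by exhibiting a fixed-point-free sign-reversing involution on the associated signed set of combinatorial objects.

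The structural facts that make such an involution plausible are two geometric dichotomies coming from the UIO embedding $U\subset\R$. First, a subset $S\subseteq U$ is stable in $G=\mathrm{inc}(U)$ exactly when its elements are pairwise comparable, i.e.\ when, sorted along the real line, all consecutive gaps are $\ge 1$; so $e_j^G$ enumerates $j$-element chains. Dually, a subset is connected in $(U,\sim)$ exactly when its sorted consecutive gaps are all $<1$, so the connectivity clause in the definition of a correct sequence means precisely that every prefix is gap-$(<1)$ connected. With this dictionary, each signed summand above is a pair $(S,\vec u)$ consisting of a $j$-element chain $S$ and a correct sequence $\vec u$ of length $k-j$ (the $j=0$ term being a bare correct sequence of length $k$, and the degenerate $j=k$ term a $k$-element chain carrying one of $k$ marks), with sign $(-1)^{j}$. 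The involution should move a single canonically chosen element between the chain part $S$ and the correct part $\vec u$, changing $j$ by $\pm1$ and hence flipping the sign; the content of the identity is that this pairing can be made to cancel everything.

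The hard part is designing the involution so that it simultaneously respects the two competing constraints: extracting an element from $\vec u$ into $S$ must keep $S$ a chain (comparability to every element already in $S$) and must leave a genuinely correct sequence behind (both $w_i\not\succ w_{i+1}$ and connectivity of \emph{every} prefix), while the reverse insertion must restore exactly these conditions. One direction is clean — every prefix of a correct sequence is again correct, so deleting the last letter is always legal — but re-inserting a chain element forces a matched pair of conditions (a non-$\succ$ relation and a $\sim$-adjacency needed for prefix connectivity), and these must be shown to correspond bijectively. I expect the geometric gap characterizations to be the right tool here: they should pin down the canonical element to transfer (for instance the one realizing an extreme gap) and let one verify involutivity and fixed-point-freeness by a local check on sorted gaps. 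A secondary bookkeeping point is the multiplicity-$k$ term $k\,e_k^G$, which must be made to cancel against the chain-of-size-$(k-1)$ contributions in a manner consistent with the fact that correct sequences may repeat letters; handling this degenerate end of the induction cleanly is where I would be most careful. Once the involution is verified, the homogeneous identity follows and the induction closes, giving $Q_k=p_k^G\in\mathbb{N}[U]$.
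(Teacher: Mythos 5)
Your reduction is sound as far as it goes: Newton's identity does transfer through $\phi_G$, the base case $Q_1=e_1^U=p_1^U$ is correct, and the induction does reduce the theorem to the homogeneous identity $\sum_{j=0}^{k-1}(-1)^{j}e_j^U Q_{k-j}+(-1)^{k}k\,e_k^U=0$. Your dictionary is also right (stable sets of $\mathrm{inc}(U)$ are chains, i.e.\ sorted consecutive gaps $\ge 1$; prefix-connectivity means sorted gaps $<1$). But the proposal stops exactly where the mathematical content begins: the sign-reversing involution on pairs $(S,\vec u)$ is never defined. You do not say which element moves, in which direction, under what condition a pair falls in the ``extract'' case versus the ``insert'' case, nor why the map is involutive and fixed-point free. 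You yourself flag the three places where this is genuinely delicate --- re-inserting a chain element must simultaneously restore the non-$\succ$ condition and connectivity of every prefix; correct sequences may repeat letters while chains cannot; and the term $k\,e_k^U$ must cancel with multiplicity exactly $k$. These are not bookkeeping details. The classical involution proving Newton's identities moves the largest letter between the chain $S$ and a power $x_i^{k-j}$, and it works because the power-sum part is a single repeated letter; here the power-sum analogue has been replaced by correct sequences, whose admissible modifications depend on the entire prefix structure rather than on one comparison, and it is precisely this difficulty that constitutes the theorem. As written, this is a proof plan, not a proof.

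For comparison, the paper confronts the same cancellation but organizes it geometrically, which is what makes it tractable: $p_k^U$ is written as the determinant whose first column has entries $i\,e_i^U$ (the matrix form of your Newton recursion), the entries are realized as path sums $e(a_i,b_j)$ on the planar grid $\Gamma_U$ with $\lambda=1^k$, and the signed sum over multipaths is killed in two stages --- first, switches at the leftmost-lowest crossing when it does not involve the path into $b_1$ (sign flips, the row-index multiplier $\sigma_{\vec{\rho}}^{-1}(1)$ is unchanged, so the set $\overline{I}_k$ cancels internally); second, an explicit bijection $\chi$ between sets $A_k$ and $B_k$ handling the remaining multipaths, where a switch involving the $b_1$-path changes the multiplier by exactly $1$. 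What survives is precisely the set of non-intersecting multipaths whose weight vectors are correct sequences. In effect, planarity of $\Gamma_U$ pins down your ``canonically chosen element'' and makes the case analysis finite. If you want to complete your route, the sets $J_k$, $L_k$, $A_k$, $B_k$ and the maps $\chi,\psi$ of the paper are exactly the combinatorial skeleton your involution would have to reproduce, translated from lattice paths into chains and sequences.
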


\begin{proof}
To prove this theorem we express the power sum $U$-symmetric function $p_k^U$ in terms of the elementary $U$-symmetric polynomials using the determinant formula:

\begin{align}
  p^U_k = \det\begin{vmatrix}
    e^U_1    & 1         & 0      & \cdots          \\
    2e^U_2   & e^U_1       & 1      & 0      & \cdots \\
    3e^U_3   & e^U_2       & e^U_1    & 1               \\
    \vdots &           &        & \ddots & \ddots \\
    ke^U_k   & e^U_{k - 1} & \cdots &        & e^U_1
  \end{vmatrix}.
\end{align}

Note that this determinant is similar to the expression for $s^U_{(1^k)^*}$ in terms of the $e$-basis; only the first column is different:

 \begin{align}
  s^U_{(1^k)^*} = \det\begin{vmatrix}
    e^U_1    & 1         & 0      & \cdots          \\
    e^U_2   & e^U_1       & 1      & 0      & \cdots \\
    e^U_3   & e^U_2       & e^U_1    & 1               \\
    \vdots &           &        & \ddots & \ddots \\
    e^U_k   & e^U_{k - 1} & \cdots &        & e^U_1
  \end{vmatrix}.
\end{align}

Next, we take a partition $\lambda = 1^k$, a grid $\Gamma_U$, and vertices (see Theorem~\ref{L_G_V} and its use in Section~\ref{Gashproof}) $$A=\{a_1=(k,1),\
a_2=(k-1,1),..,a_i=(k+1-i,1),..,\ a_k =(1,1)\},$$
and $$B=\{b_1=(k+1,n+1),\ b_2=(k,n+1),...,\ b_i
=(k+1-i+1,n+1)\,...,b_k=(2,n+1)\},$$ corresponding to the partition $\lambda$ on the grid. For instance, the grid for $U_5$ is as follows:

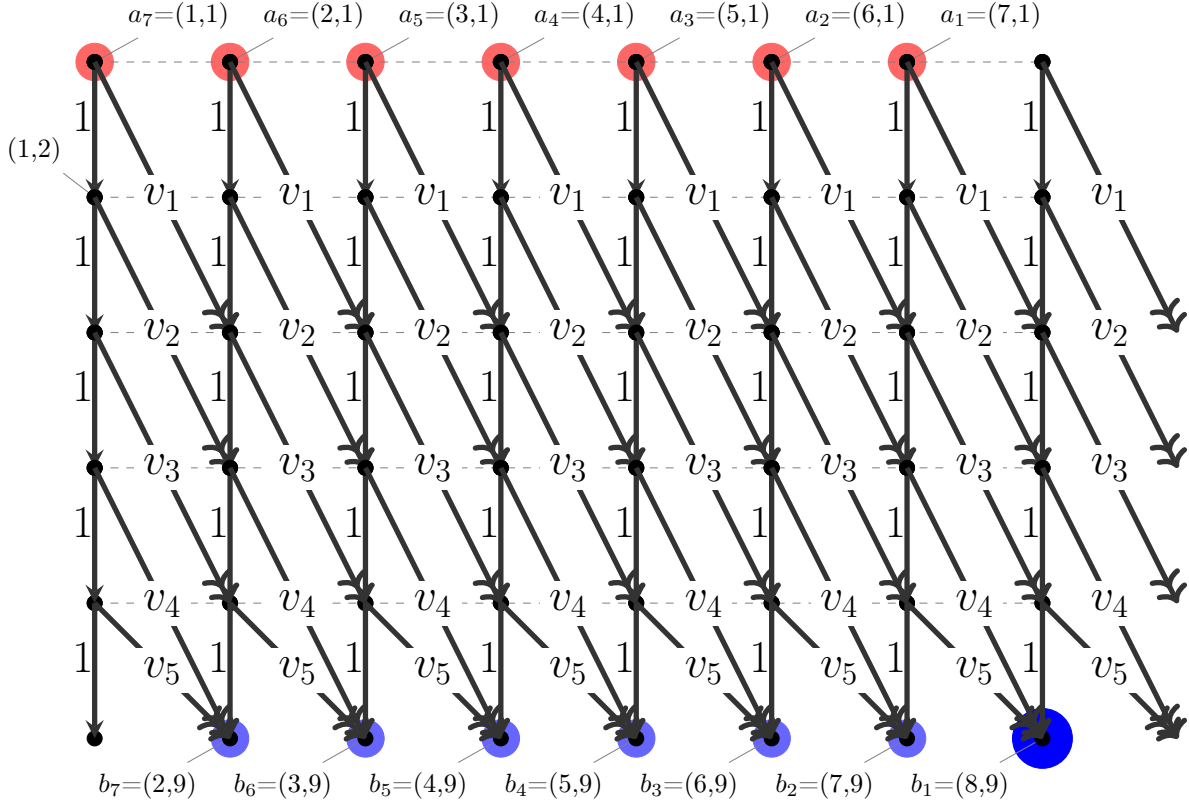
\begin{figure}[H]
\hspace*{-0.5cm} 
\begin{tikzpicture}[scale=0.9]

    \coordinate (Origin)   at (0,0);

 	\coordinate (A7) at (-2,8);
	\coordinate (A6) at (0,8);
	\coordinate (A5) at (2,8);
	\coordinate (A4) at (4,8);
 	\coordinate (A3) at (6,8);
	\coordinate (A2) at (8,8);
	\coordinate (A1) at (10,8);

	\node[draw,circle,inner sep=5pt,fill,red!60] at (A7) {};
	\node[draw,circle,inner sep=5pt,fill,red!60] at (A6) {};
	\node[draw,circle,inner sep=5pt,fill,red!60] at (A5) {};
	\node[draw,circle,inner sep=5pt,fill,red!60] at (A4) {};
	\node[draw,circle,inner sep=5pt,fill,red!60] at (A3) {};
	\node[draw,circle,inner sep=5pt,fill,red!60] at (A2) {};
	\node[draw,circle,inner sep=5pt,fill,red!60] at (A1) {};
	
	\node[coordinate, pin = {45:$a_1$=(7,1)}] at (A1) {};
	\node[coordinate, pin = {45:$a_2$=(6,1)}] at (A2) {};
	\node[coordinate, pin = {45:$a_3$=(5,1)}] at (A3) {};
	\node[coordinate, pin = {45:$a_4$=(4,1)}] at (A4) {};
	\node[coordinate, pin = {45:$a_5$=(3,1)}] at (A5) {};
	\node[coordinate, pin = {45:$a_6$=(2,1)}] at (A6) {};
	\node[coordinate, pin = {45:$a_7$=(1,1)}] at (A7) {};

	\node[coordinate, pin = {135:(1,2)}] at (-2,6) {};
        
	\coordinate (B7) at (0,-2);
	\coordinate (B6) at (2,-2);
	\coordinate (B5) at (4,-2);
 	\coordinate (B4) at (6,-2);
	\coordinate (B3) at (8,-2);
	\coordinate (B2) at (10,-2);
	\coordinate (B1) at (12,-2);

	\node[draw,circle,inner sep=5pt,fill,blue!60] at (B7) {};
	\node[draw,circle,inner sep=5pt,fill,blue!60] at (B6) {};
	\node[draw,circle,inner sep=5pt,fill,blue!60] at (B5) {};
	\node[draw,circle,inner sep=5pt,fill,blue!60] at (B4) {};
	\node[draw,circle,inner sep=5pt,fill,blue!60] at (B3) {};
	\node[draw,circle,inner sep=5pt,fill,blue!60] at (B2) {};
	\node[draw,circle,inner sep=8pt,fill,blue] at (B1) {};

	\node[coordinate, pin = {225:$b_1$=(8,9)}] at (B1) {};
	\node[coordinate, pin = {225:$b_2$=(7,9)}] at (B2) {};
	\node[coordinate, pin = {225:$b_3$=(6,9)}] at (B3) {};
	\node[coordinate, pin = {225:$b_4$=(5,9)}] at (B4) {};
	\node[coordinate, pin = {225:$b_5$=(4,9)}] at (B5) {};
	\node[coordinate, pin = {225:$b_6$=(3,9)}] at (B6) {};
	\node[coordinate, pin = {225:$b_7$=(2,9)}] at (B7) {};

	\coordinate (Shift1) at (2,-2);
	\coordinate (Shift2) at (2,-4);

    \draw[style=help lines,dashed] (-2,-2) grid[step=2cm] (12,8);

    \foreach \x in {-1,0,...,6}{
      \foreach \y in {-1,0,...,4}{
        \node[draw,circle,inner sep=2pt,fill,] at (2*\x,2*\y) {};

      }
    }

    \foreach \x in {-1,-0,...,6}{
      \foreach \y in {0,1,...,4}{
        \node[draw,circle,inner sep=2pt,fill,] at (2*\x,2*\y) {};
	 \draw [-stealth,black!80,line width=2pt] (2*\x,2*\y)
        -- ($(2*\x,2*\y)+(0,-2)$)
	node[pos=0.4,left,black, outer sep=-3pt] {\LARGE 1};
	
      }
    }


	\foreach \x in {-1,0,...,6}{
	
		\FPeval{\z}{clip(5-0)};
		 \draw [->>,black!80,line width=2pt] (2*\x,0) -- ($(2*\x,0)+(Shift1)$) node [black,midway,fill=white] {\LARGE $v_{\z}$};
	}

\foreach \x in {-1,0,...,6}{
	\foreach \y in {1,...,4}{
		\node[draw,circle,inner sep=2pt,fill,] at (2*\x,2*\y) {};
			\FPeval{\z}{clip(5-\y)};
			\draw [->>,black!80,line width=2pt] (2*\x,2*\y) -- ($(2*\x,2*\y)+(Shift2)$) node [black,midway,fill=white] {\LARGE $v_{\z}$ };
	}
}

\end{tikzpicture}
\caption{The grid $\Gamma_{U_5}$ and vertices located with respect to partition $\lambda=1^5$.}
\label{1pgrid}
\end{figure}

As in the section \ref{Gashproof}, we have $$e( a_i,\
b_j) = e^U_{1+j-i}.$$

Recall that (see Theorem~\ref{L_G_V} for more details) for
every directed path $\rho$ on $\Gamma_U$, $w(\rho)$ denotes the product of the
weights of the edges in the path. Denote by $w(\vec{\rho}\ )=(w(\rho_1),...,w(\rho_k))$ the vector of weight products over the paths of $\vec{\rho}$.

\begin{align}
 s_k^U   & = \mathrm{det}(e^U_{1+j-i} )=\mathrm{det}(e( a_i,\ b_j)) = \\
              & = \sum\limits_{\vec{\rho}=(\rho_1,...,\rho_k):A\rightarrow B} \mathrm{sign}(\sigma_{\vec{\rho}})\prod\limits_{i=1}^k w(\rho_i)= \sum\limits_{ \substack{(\rho_1,...,\rho_k):A\rightarrow B \\ \text{non-intersecting} }}\prod\limits_{i=1}^k w(\rho_i).
\end{align}

To  obtain $p^U_k$, we adjust the first column, multiplying every element by the number of its row:

\begin{align}
p^U_k & = \det\begin{vmatrix}
    e^U_1    & 1         & 0      & \cdots          \\
    2e^U_2   & e^U_1       & 1      & 0      & \cdots \\
    3e^U_3   & e^U_2       & e^U_1    & 1               \\
    \vdots &           &        & \ddots & \ddots \\
    ke^U_k   & e^U_{k - 1} & \cdots &        & e^U_1
  \end{vmatrix}=\det\begin{vmatrix}
    \ e( a_1,\ b_1)    & 1         & 0      & \cdots          \\
    2e( a_2,\ b_1)  & e( a_2,\ b_2)        & 1      & 0      & \cdots \\
    3e( a_3,\ b_1)   & e( a_3,\ b_2)       & e( a_3,\ b_3)    & 1               \\
    \vdots &           &        & \ddots & \ddots \\
    ke( a_k,\ b_1)   & e( a_k,\ b_2) & \cdots &        & e( a_k,\ b_k) \end{vmatrix}=\\&\label{mainPG} =\sum\limits_{\vec{\rho}=(\rho_1,...,\rho_k):A\rightarrow B}\mathrm{sign}(\sigma_{\vec{\rho}})\cdot\sigma_{\vec{\rho}}^{-1}(1)\prod\limits_{i=1}^k w(\rho_i).
\end{align}

In this sum every multipath has a multiplier equal to the index of the vertex from $A$, from which the corresponding path goes to $b_1$. We mark the vertex $b_1$ with a larger dot on the grid (Picture \ref{1pgrid}) to emphasize this. We cannot apply Theorem~\ref{L_G_V} here, as we did for $s^U$ functions, to obtain positive sum.

\bigskip
We will use the following notations:
\begin{itemize}
\item If we have a path $\rho$ on $\Gamma_U$, which goes from $a$ to $b$
through $z$, then let us denote by $\rho|^z$ the part of $\rho$ from
$a$ to $z$, and by $\rho|_z$ - the part of $\rho$ from $z$ to $b$.

\item If the end of the path $\rho$ coincides with the starting point
$\pi$, then we will write $\rho*\pi$ for the concatenation
of the two paths

\item For a pair of paths $(\rho,\pi)$, crossing in point $z$, we define the usual switch operation $$\text{switch}_z(\rho,\pi) = (\rho|^z*\pi|_z,\pi|^z*\rho|_z ).$$

\item Given a multipath $\vec{\rho}$ with its paths $\rho$ and $\pi$ intersecting in point $z$, we define a multipath $\delta_z(\vec{\rho})$ by replacing $(\rho,\pi)$ by $\text{switch}_z(\rho,\pi)$ in $\vec{\rho}$. Note that our map is defined correctly, because here we consider only multipaths for the partition $\lambda=1^k$: it is obvious that 3 paths of $\vec{\rho}$ cannot intersect in one point. 
Note that $$sign(\sigma({\vec{\rho}}\ ))=-sign(\sigma({\delta_{z}(\vec{\rho}\ )})).$$

\item Given an intersecting multipath $\vec{\rho}$, we denote by $z(\vec{\rho}\ )$ (or just $z$, if it is clear which multipath is considered) its intersection point with minimum absciss and maximum ordinate, i.e. the leftmost lowest intersection point. 

\end{itemize}

Next, we classify the set of multipaths in order to simplify the sum (\ref{mainPG}).
Every path $\rho$ can be uniquely defined by its weight, $w(\rho)$, which is a product over an increasing sequence (with respect to the relation $\succ$) of elements of $U$. Here, it is important to mention that incomparable elements of $U$ can not be present in a weight of any path. Hence, every multipath $\vec{\rho}=(\rho_1,...,\rho_k)$ is in one to one correspondence with its weight vector $w(\vec{\rho}\ )=(w(\rho_1),...,w(\rho_k))$.
Below, we will use the bar notation for the sets of multipaths. The corresponding sets of weight vectors will be defined using the same letters without bars. 
\begin{itemize}

\item Let $\overline{\Omega}_k$ be the set of all multipaths $\vec{\rho}=(\rho_1,...,\rho_k)$ from $A$ to $B$.

\item Let $\overline{I}_k$ be the set of all intersecting multipaths  $\vec{\rho}\in\overline{\Omega}_k$, such that the two paths from $\vec{\rho}$,
  crossing at $z(\vec{\rho}\ )$ do not end at $b_1$.

\item We denote by $\overline{P}_k$ the set of multipaths $\vec{\rho}\in\overline{\Omega}_k$, such that $w(\vec{\rho}\ )$ is correct: $$\overline{P}_k=\{\vec{\rho}\in \overline{\Omega}_k|\ w(\vec{\rho})\in P^U_k\}.$$

Note that if $\vec{\rho}\in\overline{P}_k$, then $\vec{\rho}$ is a non-intersecting multipath, since by the definition of correct $w(\vec{\rho})$ must be a tuple with non-decreasing elements (weights) with respect to the relation~$\prec$. Hence, $$\overline{I}_k\cap\overline{P}_k=\emptyset.$$

\end{itemize}

As a consequence, the sum (\ref{mainPG}) can be rewritten as 
\begin{align}
\sum\limits_{\vec{\rho}\in\overline{\Omega}_k}\mathrm{sign}(\sigma_{\vec{\rho}})\cdot\sigma_{\vec{\rho}}^{-1}(1)\prod\limits_{i=1}^k w(\rho_i)&= \sum\limits_{\vec{\rho}\in\overline{P}_k}\mathrm{sign}(\sigma_{\vec{\rho}})\cdot\sigma_{\vec{\rho}}^{-1}(1)\prod\limits_{i=1}^k w(\rho_i)\label{sumW}\\ &+\sum\limits_{\vec{\rho}\in \overline{I}_k}\mathrm{sign}(\sigma_{\vec{\rho}})\cdot\sigma_{\vec{\rho}}^{-1}(1)\prod\limits_{i=1}^k w(\rho_i)\label{sumI} \\ &+\sum\limits_{\vec{\rho}\in(\overline{\Omega}_k\setminus \overline{I}_k)\setminus\overline{P}_k}\mathrm{sign}(\sigma_{\vec{\rho}})\cdot\sigma_{\vec{\rho}}^{-1}(1)\prod\limits_{i=1}^k w(\rho_i).\label{sumJ}   
\end{align}

Let $\vec{\rho}\in \overline{I}_k$, then it is easy to see that $\delta_{z}(\vec{\rho}\ )\in \overline{I}_k, \text{ and }  \delta_{z}(\delta_{z}(\vec{\rho}\ ))=\vec{\rho}$. Hence, $\delta_{z}$ is a sign-reversing involution on $\overline{I}_k$:
$$\text{sign}(\sigma({\vec{\rho}\ }))=-\text{sign}(\sigma({\delta_{z}(\vec{\rho}\ )}))\text{ and }\delta_{z}(\overline{I}_k)=\overline{I}_k$$
On the other hand, $\delta_{z}$ does not change the multiplier:
 $$\sigma^{-1}({\delta_{z}(\vec{\rho}\ )})(1)=\sigma^{-1}_{\vec{\rho}}(1).$$
Hence, the term (\ref{sumI}) vanishes, and we have:
$$\sum\limits_{\vec{\rho}\in \overline{I}_k}\mathrm{sign}(\sigma_{\vec{\rho}})\cdot\sigma_{\vec{\rho}}^{-1}(1)\prod\limits_{i=1}^k w(\rho_i)=\delta_{z}\Bigg(\sum\limits_{\vec{\rho}\in \overline{I}_k}\mathrm{sign}(\sigma_{\vec{\rho}})\cdot\sigma_{\vec{\rho}}^{-1}(1)\prod\limits_{i=1}^k w(\rho_i)\Bigg)=-\sum\limits_{\vec{\rho}\in \overline{I}_k}\mathrm{sign}(\sigma_{\vec{\rho}})\cdot\sigma_{\vec{\rho}}^{-1}(1)\prod\limits_{i=1}^k w(\rho_i)=0.$$
Pictures \ref{Pmultipath1} and \ref{Pmultipath1switched} below illustrate this cancellation. Since neither of the 2 paths intersecting at $z$ end at $b_1$, a switch at $z$ changes the sign, but not the multiplier, and the contributions of $\vec{\rho}$ and $\delta_{z}(\vec{\rho}\ )$ cancel:
\newpage
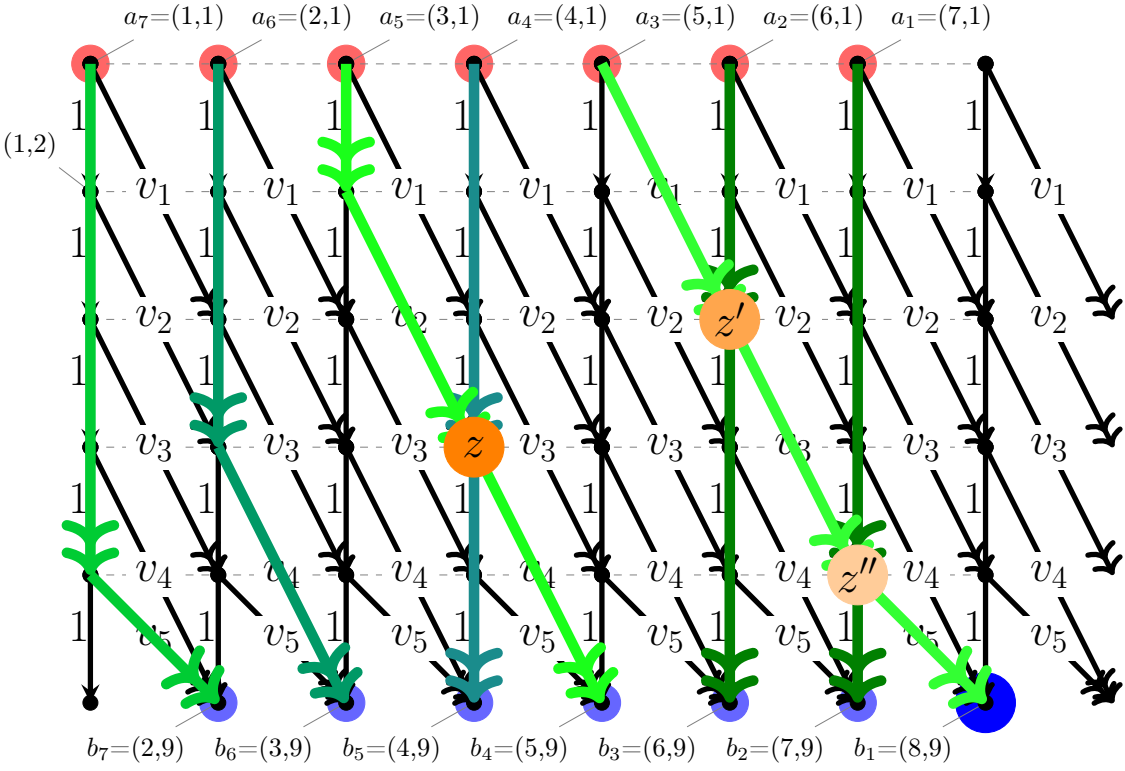
\begin{figure}[H]
\hspace*{-0.5cm} 
\begin{tikzpicture}[scale=0.85]

    \coordinate (Origin)   at (0,0);

 	\coordinate (A7) at (-2,8);
	\coordinate (A6) at (0,8);
	\coordinate (A5) at (2,8);
	\coordinate (A4) at (4,8);
 	\coordinate (A3) at (6,8);
	\coordinate (A2) at (8,8);
	\coordinate (A1) at (10,8);

	\node[draw,circle,inner sep=5pt,fill,red!60] at (A7) {};
	\node[draw,circle,inner sep=5pt,fill,red!60] at (A6) {};
	\node[draw,circle,inner sep=5pt,fill,red!60] at (A5) {};
	\node[draw,circle,inner sep=5pt,fill,red!60] at (A4) {};
	\node[draw,circle,inner sep=5pt,fill,red!60] at (A3) {};
	\node[draw,circle,inner sep=5pt,fill,red!60] at (A2) {};
	\node[draw,circle,inner sep=5pt,fill,red!60] at (A1) {};
	
	\node[coordinate, pin = {45:$a_1$=(7,1)}] at (A1) {};
	\node[coordinate, pin = {45:$a_2$=(6,1)}] at (A2) {};
	\node[coordinate, pin = {45:$a_3$=(5,1)}] at (A3) {};
	\node[coordinate, pin = {45:$a_4$=(4,1)}] at (A4) {};
	\node[coordinate, pin = {45:$a_5$=(3,1)}] at (A5) {};
	\node[coordinate, pin = {45:$a_6$=(2,1)}] at (A6) {};
	\node[coordinate, pin = {45:$a_7$=(1,1)}] at (A7) {};

	\node[coordinate, pin = {135:(1,2)}] at (-2,6) {};
        
	\coordinate (B7) at (0,-2);
	\coordinate (B6) at (2,-2);
	\coordinate (B5) at (4,-2);
 	\coordinate (B4) at (6,-2);
	\coordinate (B3) at (8,-2);
	\coordinate (B2) at (10,-2);
	\coordinate (B1) at (12,-2);

	\node[draw,circle,inner sep=5pt,fill,blue!60] at (B7) {};
	\node[draw,circle,inner sep=5pt,fill,blue!60] at (B6) {};
	\node[draw,circle,inner sep=5pt,fill,blue!60] at (B5) {};
	\node[draw,circle,inner sep=5pt,fill,blue!60] at (B4) {};
	\node[draw,circle,inner sep=5pt,fill,blue!60] at (B3) {};
	\node[draw,circle,inner sep=5pt,fill,blue!60] at (B2) {};
	\node[draw,circle,inner sep=8pt,fill,blue] at (B1) {};

	\node[coordinate, pin = {225:$b_1$=(8,9)}] at (B1) {};
	\node[coordinate, pin = {225:$b_2$=(7,9)}] at (B2) {};
	\node[coordinate, pin = {225:$b_3$=(6,9)}] at (B3) {};
	\node[coordinate, pin = {225:$b_4$=(5,9)}] at (B4) {};
	\node[coordinate, pin = {225:$b_5$=(4,9)}] at (B5) {};
	\node[coordinate, pin = {225:$b_6$=(3,9)}] at (B6) {};
	\node[coordinate, pin = {225:$b_7$=(2,9)}] at (B7) {};

	\coordinate (Shift1) at (2,-2);
	\coordinate (Shift2) at (2,-4);

    \draw[style=help lines,dashed] (-2,-2) grid[step=2cm] (12,8);

    \foreach \x in {-1,0,...,6}{
      \foreach \y in {-1,0,...,4}{
        \node[draw,circle,inner sep=2pt,fill,] at (2*\x,2*\y) {};

      }
    }

    \foreach \x in {-1,-0,...,6}{
      \foreach \y in {0,1,...,4}{
        \node[draw,circle,inner sep=2pt,fill,] at (2*\x,2*\y) {};
	 \draw [-stealth,black,line width=2pt] (2*\x,2*\y)
        -- ($(2*\x,2*\y)+(0,-2)$)
	node[pos=0.4,left,black, outer sep=-3pt] {\LARGE 1};
	
      }
    }


	\foreach \x in {-1,0,...,6}{
	
		\FPeval{\z}{clip(5-0)};
		 \draw [->>,black,line width=2pt] (2*\x,0) -- ($(2*\x,0)+(Shift1)$) node [black,midway,fill=white] {\LARGE $v_{\z}$};
	}

\foreach \x in {-1,0,...,6}{
	\foreach \y in {1,...,4}{
		\node[draw,circle,inner sep=2pt,fill,] at (2*\x,2*\y) {};
			\FPeval{\z}{clip(5-\y)};
			\draw [->>,black,line width=2pt] (2*\x,2*\y) -- ($(2*\x,2*\y)+(Shift2)$) node [black,midway,fill=white] {\LARGE $v_{\z}$ };
	}
}


\draw [->>,black!50!green,line width=4pt] (A1) -- (10,0);
\draw [->>,black!50!green,line width=4pt] (10,0) -- (B2);

\draw [->>,black!50!green,line width=4pt] (A2) -- (8,4);
\draw [->>,black!50!green,line width=4pt] (8,4) -- (B3);

\draw [->>,green!80,line width=4pt] (A3) -- (8,4);
\draw [->>,green!80,line width=4pt] (8,4) -- (10,0);
\draw [->>,green!80,line width=4pt] (10,0) -- (B1);

\draw [->>,blue!50!green!90,line width=4pt] (A4) -- (4,2);
\draw [->>,blue!50!green!90,line width=4pt] (4,2) -- (B5);

\draw [->>,green!90,line width=4pt] (A5) -- (2,6);
\draw [->>,green!90,line width=4pt] (2,6) -- (4,2);
\draw [->>,green!90,line width=4pt] (4,2) -- (B4);

\draw [->>,blue!40!green,line width=4pt] (A6) -- (0,2);
\draw [->>,blue!40!green,line width=4pt] (0,2) -- (B6);

\draw [->>,blue!20!green,line width=4pt] (A7) -- (-2,0);
\draw [->>,blue!20!green,line width=4pt] (-2,0) -- (B7);


\node[ draw,circle,inner sep=8pt,fill,orange] at (4,2) {};
\node[] at (4,2) {\LARGE{$z$}};

\node[ draw,circle,inner sep=8pt,fill,orange!70] at (8,4) {};
\node[] at (8,4) {\LARGE{$z'$}};

\node[ draw,circle,inner sep=8pt,fill,orange!40] at (10,0) {};
\node[] at (10,0) {\LARGE{$z''$}};

\end{tikzpicture}
\caption{The grid $\Gamma_{U_5}$ and multipath $\vec{\rho}$.}
\label{Pmultipath1}

\end{figure}
The path $\rho_5$: $a_5\to b_4$ intersects the path
$\rho_4$: $a_4\to b_5$ at the  point $z$. After the switch at $z$, we have the paths $\rho'_5$: $a_5\to b_5$ and $\rho'_4$: $a_4\to b_4$:
\begin{figure}[H]
\hspace*{-0.5cm} 
\begin{tikzpicture}[scale=0.85]

    \coordinate (Origin)   at (0,0);

 	\coordinate (A7) at (-2,8);
	\coordinate (A6) at (0,8);
	\coordinate (A5) at (2,8);
	\coordinate (A4) at (4,8);
 	\coordinate (A3) at (6,8);
	\coordinate (A2) at (8,8);
	\coordinate (A1) at (10,8);

	\node[draw,circle,inner sep=5pt,fill,red!60] at (A7) {};
	\node[draw,circle,inner sep=5pt,fill,red!60] at (A6) {};
	\node[draw,circle,inner sep=5pt,fill,red!60] at (A5) {};
	\node[draw,circle,inner sep=5pt,fill,red!60] at (A4) {};
	\node[draw,circle,inner sep=5pt,fill,red!60] at (A3) {};
	\node[draw,circle,inner sep=5pt,fill,red!60] at (A2) {};
	\node[draw,circle,inner sep=5pt,fill,red!60] at (A1) {};
	
	\node[coordinate, pin = {45:$a_1$=(7,1)}] at (A1) {};
	\node[coordinate, pin = {45:$a_2$=(6,1)}] at (A2) {};
	\node[coordinate, pin = {45:$a_3$=(5,1)}] at (A3) {};
	\node[coordinate, pin = {45:$a_4$=(4,1)}] at (A4) {};
	\node[coordinate, pin = {45:$a_5$=(3,1)}] at (A5) {};
	\node[coordinate, pin = {45:$a_6$=(2,1)}] at (A6) {};
	\node[coordinate, pin = {45:$a_7$=(1,1)}] at (A7) {};

	\node[coordinate, pin = {135:(1,2)}] at (-2,6) {};
        
	\coordinate (B7) at (0,-2);
	\coordinate (B6) at (2,-2);
	\coordinate (B5) at (4,-2);
 	\coordinate (B4) at (6,-2);
	\coordinate (B3) at (8,-2);
	\coordinate (B2) at (10,-2);
	\coordinate (B1) at (12,-2);

	\node[draw,circle,inner sep=5pt,fill,blue!60] at (B7) {};
	\node[draw,circle,inner sep=5pt,fill,blue!60] at (B6) {};
	\node[draw,circle,inner sep=5pt,fill,blue!60] at (B5) {};
	\node[draw,circle,inner sep=5pt,fill,blue!60] at (B4) {};
	\node[draw,circle,inner sep=5pt,fill,blue!60] at (B3) {};
	\node[draw,circle,inner sep=5pt,fill,blue!60] at (B2) {};
	\node[draw,circle,inner sep=8pt,fill,blue] at (B1) {};

	\node[coordinate, pin = {225:$b_1$=(8,9)}] at (B1) {};
	\node[coordinate, pin = {225:$b_2$=(7,9)}] at (B2) {};
	\node[coordinate, pin = {225:$b_3$=(6,9)}] at (B3) {};
	\node[coordinate, pin = {225:$b_4$=(5,9)}] at (B4) {};
	\node[coordinate, pin = {225:$b_5$=(4,9)}] at (B5) {};
	\node[coordinate, pin = {225:$b_6$=(3,9)}] at (B6) {};
	\node[coordinate, pin = {225:$b_7$=(2,9)}] at (B7) {};

	\coordinate (Shift1) at (2,-2);
	\coordinate (Shift2) at (2,-4);
    \draw[style=help lines,dashed] (-2,-2) grid[step=2cm] (12,8);

    \foreach \x in {-1,0,...,6}{
      \foreach \y in {-1,0,...,4}{
        \node[draw,circle,inner sep=2pt,fill,] at (2*\x,2*\y) {};

      }
    }

    \foreach \x in {-1,-0,...,6}{
      \foreach \y in {0,1,...,4}{
        \node[draw,circle,inner sep=2pt,fill,] at (2*\x,2*\y) {};
	 \draw [-stealth,black,line width=2pt] (2*\x,2*\y)
        -- ($(2*\x,2*\y)+(0,-2)$)
	node[pos=0.4,left,black, outer sep=-3pt] {\LARGE 1};
	
      }
    }


	\foreach \x in {-1,0,...,6}{
	
		\FPeval{\z}{clip(5-0)};
		 \draw [->>,black,line width=2pt] (2*\x,0) -- ($(2*\x,0)+(Shift1)$) node [black,midway,fill=white] {\LARGE $v_{\z}$};
	}

\foreach \x in {-1,0,...,6}{
	\foreach \y in {1,...,4}{
		\node[draw,circle,inner sep=2pt,fill,] at (2*\x,2*\y) {};
			\FPeval{\z}{clip(5-\y)};
			\draw [->>,black,line width=2pt] (2*\x,2*\y) -- ($(2*\x,2*\y)+(Shift2)$) node [black,midway,fill=white] {\LARGE $v_{\z}$ };
	}
}


\draw [->>,black!50!green,line width=4pt] (A1) -- (10,0);
\draw [->>,black!50!green,line width=4pt] (10,0) -- (B2);

\draw [->>,black!50!green,line width=4pt] (A2) -- (8,4);
\draw [->>,black!50!green,line width=4pt] (8,4) -- (B3);

\draw [->>,green!80,line width=4pt] (A3) -- (8,4);
\draw [->>,green!80,line width=4pt] (8,4) -- (10,0);
\draw [->>,green!80,line width=4pt] (10,0) -- (B1);

\draw [->>,blue!50!green!90,line width=4pt] (A4) -- (4,2);
\draw [->>,blue!50!green!90,line width=4pt] (4,2) -- (B4);

\draw [->>,green!90,line width=4pt] (A5) -- (2,6);
\draw [->>,green!90,line width=4pt] (2,6) -- (4,2);
\draw [->>,green!90,line width=4pt] (4,2) -- (B5);

\draw [->>,blue!40!green,line width=4pt] (A6) -- (0,2);
\draw [->>,blue!40!green,line width=4pt] (0,2) -- (B6);

\draw [->>,blue!20!green,line width=4pt] (A7) -- (-2,0);
\draw [->>,blue!20!green,line width=4pt] (-2,0) -- (B7);


\node[ draw,circle,inner sep=8pt,fill,orange] at (4,2) {};
\node[] at (4,2) {\LARGE{$z$}};

\node[ draw,circle,inner sep=8pt,fill,orange!70] at (8,4) {};
\node[] at (8,4) {\LARGE{$z'$}};

\node[ draw,circle,inner sep=8pt,fill,orange!40] at (10,0) {};
\node[] at (10,0) {\LARGE{$z''$}};

\end{tikzpicture}
\caption{The grid $\Gamma_{U_5}$ and multipath $\delta_{z}(\vec{\rho})$.}
\label{Pmultipath1switched}

\end{figure}
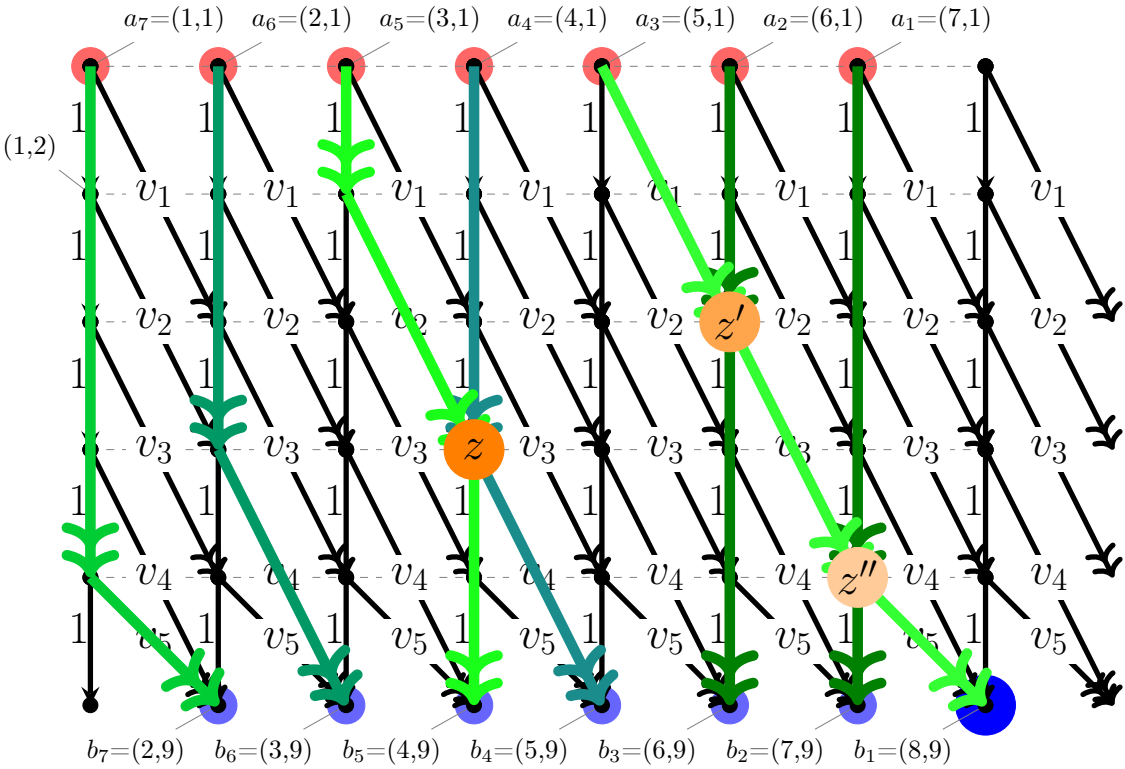

\newpage

We denote by $J_k$ the following set of weights vectors, which describe multipaths like on the Picture \ref{Pmultipath1preproper1}:
$$J_k=\{(1^{l-1},v_{i_1}\cdot ...\cdot v_{i_l},v_{i_{l+1}},...v_{i_k})\ | 1\leq l\leq k;\ v_{i_j}\prec v_{i_{j+1}}, \text{ if } 1\leq j\leq l;\ v_{i_1}\nsucc v_{i_{l+1}};\ v_{i_j}\nsucc v_{i_{j+1}}, \text{ if } l< j\leq k\}.$$
Denote by $\overline{J}_k$ the corresponding set of mutipaths, which are uniquely defined by the vectors of its weights.

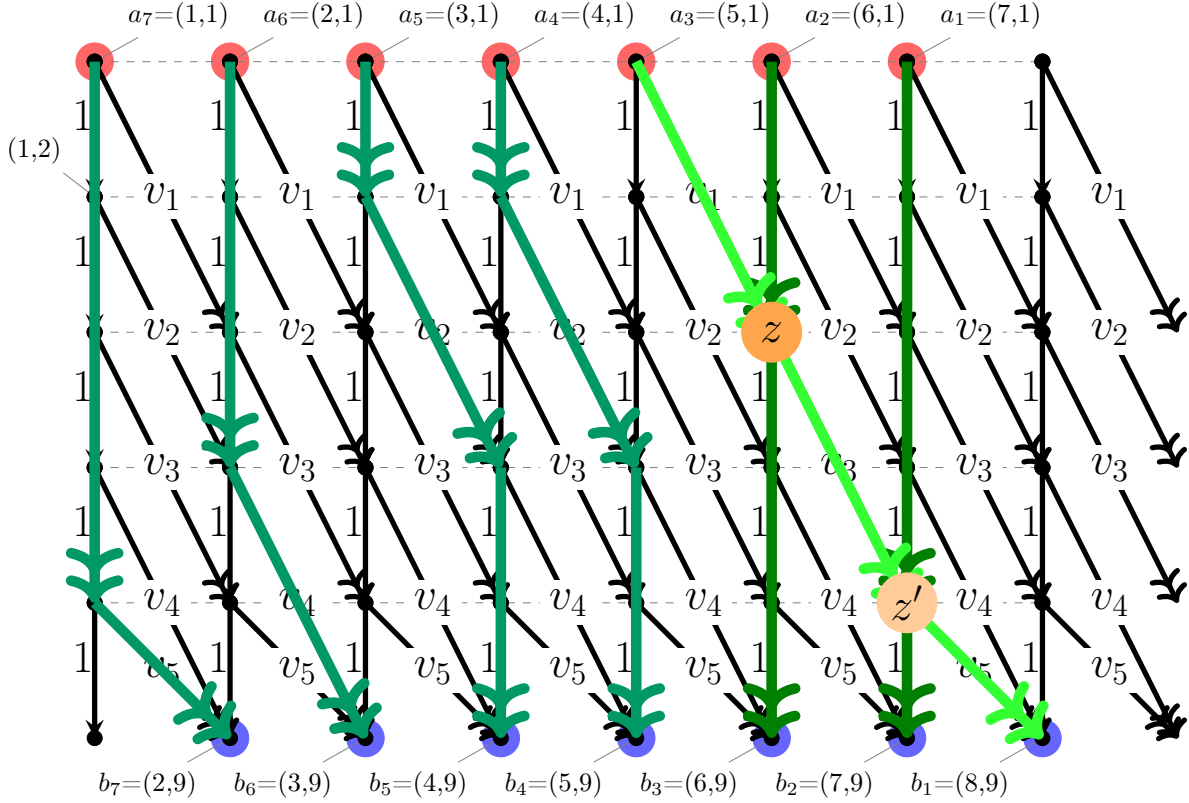
\begin{figure}[H]
\hspace*{-0.5cm} 
\begin{tikzpicture}[scale=0.9]

    \coordinate (Origin)   at (0,0);

 	\coordinate (A7) at (-2,8);
	\coordinate (A6) at (0,8);
	\coordinate (A5) at (2,8);
	\coordinate (A4) at (4,8);
 	\coordinate (A3) at (6,8);
	\coordinate (A2) at (8,8);
	\coordinate (A1) at (10,8);

	\node[draw,circle,inner sep=5pt,fill,red!60] at (A7) {};
	\node[draw,circle,inner sep=5pt,fill,red!60] at (A6) {};
	\node[draw,circle,inner sep=5pt,fill,red!60] at (A5) {};
	\node[draw,circle,inner sep=5pt,fill,red!60] at (A4) {};
	\node[draw,circle,inner sep=5pt,fill,red!60] at (A3) {};
	\node[draw,circle,inner sep=5pt,fill,red!60] at (A2) {};
	\node[draw,circle,inner sep=5pt,fill,red!60] at (A1) {};
	
	\node[coordinate, pin = {45:$a_1$=(7,1)}] at (A1) {};
	\node[coordinate, pin = {45:$a_2$=(6,1)}] at (A2) {};
	\node[coordinate, pin = {45:$a_3$=(5,1)}] at (A3) {};
	\node[coordinate, pin = {45:$a_4$=(4,1)}] at (A4) {};
	\node[coordinate, pin = {45:$a_5$=(3,1)}] at (A5) {};
	\node[coordinate, pin = {45:$a_6$=(2,1)}] at (A6) {};
	\node[coordinate, pin = {45:$a_7$=(1,1)}] at (A7) {};

	\node[coordinate, pin = {135:(1,2)}] at (-2,6) {};
        
	\coordinate (B7) at (0,-2);
	\coordinate (B6) at (2,-2);
	\coordinate (B5) at (4,-2);
 	\coordinate (B4) at (6,-2);
	\coordinate (B3) at (8,-2);
	\coordinate (B2) at (10,-2);
	\coordinate (B1) at (12,-2);

	\node[draw,circle,inner sep=5pt,fill,blue!60] at (B7) {};
	\node[draw,circle,inner sep=5pt,fill,blue!60] at (B6) {};
	\node[draw,circle,inner sep=5pt,fill,blue!60] at (B5) {};
	\node[draw,circle,inner sep=5pt,fill,blue!60] at (B4) {};
	\node[draw,circle,inner sep=5pt,fill,blue!60] at (B3) {};
	\node[draw,circle,inner sep=5pt,fill,blue!60] at (B2) {};
	\node[draw,circle,inner sep=5pt,fill,blue!60] at (B1) {};

	\node[coordinate, pin = {225:$b_1$=(8,9)}] at (B1) {};
	\node[coordinate, pin = {225:$b_2$=(7,9)}] at (B2) {};
	\node[coordinate, pin = {225:$b_3$=(6,9)}] at (B3) {};
	\node[coordinate, pin = {225:$b_4$=(5,9)}] at (B4) {};
	\node[coordinate, pin = {225:$b_5$=(4,9)}] at (B5) {};
	\node[coordinate, pin = {225:$b_6$=(3,9)}] at (B6) {};
	\node[coordinate, pin = {225:$b_7$=(2,9)}] at (B7) {};

	\coordinate (Shift1) at (2,-2);
	\coordinate (Shift2) at (2,-4);

    \draw[style=help lines,dashed] (-2,-2) grid[step=2cm] (12,8);

    \foreach \x in {-1,0,...,6}{
      \foreach \y in {-1,0,...,4}{
        \node[draw,circle,inner sep=2pt,fill,] at (2*\x,2*\y) {};

      }
    }

    \foreach \x in {-1,-0,...,6}{
      \foreach \y in {0,1,...,4}{
        \node[draw,circle,inner sep=2pt,fill,] at (2*\x,2*\y) {};
	 \draw [-stealth,black,line width=2pt] (2*\x,2*\y)
        -- ($(2*\x,2*\y)+(0,-2)$)
	node[pos=0.4,left,black, outer sep=-3pt] {\LARGE 1};
	
      }
    }


	\foreach \x in {-1,0,...,6}{
	
		\FPeval{\z}{clip(5-0)};
		 \draw [->>,black,line width=2pt] (2*\x,0) -- ($(2*\x,0)+(Shift1)$) node [black,midway,fill=white] {\LARGE $v_{\z}$};
	}

\foreach \x in {-1,0,...,6}{
	\foreach \y in {1,...,4}{
		\node[draw,circle,inner sep=2pt,fill,] at (2*\x,2*\y) {};
			\FPeval{\z}{clip(5-\y)};
			\draw [->>,black,line width=2pt] (2*\x,2*\y) -- ($(2*\x,2*\y)+(Shift2)$) node [black,midway,fill=white] {\LARGE $v_{\z}$ };
	}
}

\draw [->>,black!50!green,line width=4pt] (A1) -- (10,0);
\draw [->>,black!50!green,line width=4pt] (10,0) -- (B2);

\draw [->>,black!50!green,line width=4pt] (A2) -- (8,4);
\draw [->>,black!50!green,line width=4pt] (8,4) -- (B3);

\draw [->>,green!80,line width=4pt] (A3) -- (8,4);
\draw [->>,green!80,line width=4pt] (8,4) -- (10,0);
\draw [->>,green!80,line width=4pt] (10,0) -- (B1);

\draw [->>,blue!40!green,line width=4pt] (A4) -- (4,6);
\draw [->>,blue!40!green,line width=4pt] (4,6) -- (6,2);
\draw [->>,blue!40!green,line width=4pt] (6,2) -- (B4);

\draw [->>,blue!40!green,line width=4pt] (A5) -- (2,6);
\draw [->>,blue!40!green,line width=4pt] (2,6) -- (4,2);
\draw [->>,blue!40!green,line width=4pt] (4,2) -- (B5);

\draw [->>,blue!40!green,line width=4pt] (A6) -- (0,2);
\draw [->>,blue!40!green,line width=4pt] (0,2) -- (B6);

\draw [->>,blue!40!green,line width=4pt] (A7) -- (-2,0);
\draw [->>,blue!40!green,line width=4pt] (-2,0) -- (B7);


\node[ draw,circle,inner sep=8pt,fill,orange!70] at (8,4) {};
\node[] at (8,4) {\LARGE{$z$}};

\node[ draw,circle,inner sep=8pt,fill,orange!40] at (10,0) {};
\node[] at (10,0) {\LARGE{$z'$}};

\end{tikzpicture}
\caption{The grid $\Gamma_{U_5}$ and a multipath without easy intersection points. }
\label{Pmultipath1preproper1}

\end{figure}

Let $\vec{\rho}\in(\overline{\Omega}_k\setminus \overline{I}_k)\setminus\overline{P}_k$.
\begin{itemize}
\item If $\vec{\rho}$ is intersecting, then the absolute value of the difference between the multipliers of
$\vec{\rho}$ and $\delta_{z}(\vec{\rho}\ )$ in the sum ($\ref{sumJ}$) is equal to $1$:
$$ |\sigma_{\vec{\rho}}^{-1}(1)-\sigma({\delta_{z}(\vec{\rho}}))^{-1}(1)|=1,$$
because if $\rho_l$ goes to $b_1$, then $z$ could only be obtained as an intersection of $\rho_l$ and $\rho_{l-1}$ or $\rho_{l+1}$. Hence, since $\delta_{z}(\vec{\rho})\in(\overline{\Omega}_k\setminus \overline{I}_k)\setminus\overline{P}_k$, we make a switch at $z$ and eliminate one of the switched multipaths (from $(\overline{\Omega}_k\setminus \overline{I}_k)\setminus\overline{P}_k\setminus\overline{J}_k$) and the multiplier of the multipath with longer intersecting path (from $\overline{J}_k$) in the sum~($\ref{sumJ}$). 

\item If $\vec{\rho}\in(\overline{\Omega}_k\setminus \overline{I}_k)\setminus\overline{P}_k$ is non-intersecting, then its multiplier is also equal to $1$. Denote the set of such multipaths by $\overline{L}_k$:
$$\overline{L}_k=\{ \vec{\rho}\in(\overline{\Omega}_k\setminus \overline{I}_k)\setminus\overline{P}_k| \  \vec{\rho} \text{ is non-intersecting}  \}.$$
Then, $$L_k=\{ (w_1,...,w_k)|\ w_i\nsucc w_{i+1}; \exists m,\text{ s.t. } w_m\succ \max\limits_{1\leq q<m}{w_q} \}.$$
\end{itemize}

Hence, we can rewrite the sum ($\ref{sumJ}$) in the following way:
\begin{equation}\label{sumK}
\sum\limits_{\vec{\rho}\in\overline{\Omega}_k\setminus \overline{I}_k\setminus\overline{P}_k}\mathrm{sign}(\sigma_{\vec{\rho}})\cdot\sigma_{\vec{\rho}}^{-1}(1)\prod\limits_{i=1}^k w(\rho_i)=\sum\limits_{\vec{\rho}\in\overline{J}_k\sqcup\overline{L}_k}\mathrm{sign}(\sigma_{\vec{\rho}})\prod\limits_{i=1}^k w(\rho_i).
\end{equation}

To eliminate the sum~(\ref{sumJ}), we construct a sign-reversing involution on $J_k\sqcup L_k$. Let
$$A_k=\{ (1^{l-1},w_1\cdot ...\cdot w_l,w_{l+1},...w_k)\in J_k|\ l>1 \text{ and } w_j\nsucc \max\limits_{1\leq q<j}{w_q} \text{ for every } 1\leq j\leq k  \}.$$
$$B_k=\{ (1^{l-1},w_1\cdot ...\cdot w_l,w_{l+1},...w_k)\in J_k\sqcup L_k|\ \exists m,\text{ s.t. } w_m\succ \max\limits_{1\leq q<m}{w_q} \}.$$
Then, we have $$J_k\sqcup L_k=A_k\sqcup B_k.$$
Next, we construct a sign-reversing bijection between $A_k$ and $B_k$.

First, we define map $\chi: A_k\to B_k$. If  $\vec{v}=(1^{l-1},w_1\cdot ...\cdot w_l,w_{l+1},...w_k)\in A_k$, then let $$m=\max\{j\leq n|\ w_m\succ w_j \text{ for } j<m\}.$$ We set $$\chi(\vec{v}\ )= (1^{l-1},w_1\cdot ...\cdot w_l\cdot w_m,w_{l+1},...,w_{m-1},w_{m+1},...,w_k)\in B_k.$$ 
Note that $\chi$ changes the sign $\vec{v}$ by increasing its $l$-th weight by 1.
Second, if $$\vec{u}=(1^{l-1},w_1\cdot ...\cdot w_l,w_{l+1},...w_k)\in A_k,$$ then we set $$m'=\max\{j\leq k|\ w_l\succ w_j\},$$ and define $$\psi(\vec{u}\ ) = (1^{l-1},w_1\cdot ...\cdot w_{l-1},w_{l+1},...,w_{m'},w_l,w_{m'+1},...w_k)\in B_k.$$
Note that $\psi=\chi^{-1}.$
For instance, the multipath from Picture \ref{Pmultipath1preproper1}, which belongs to $A_k$, is transformed to the below mutipath (Picture \ref{Pmultipath1preproper2}) under the action of $\psi$, and vice versa Picture \ref{Pmultipath1preproper1} can be obtained from Picture  \ref{Pmultipath1preproper2} applying direct map $\chi$:

\begin{figure}[H]
\hspace*{-0.5cm} 
\begin{tikzpicture}[scale=0.9]

    \coordinate (Origin)   at (0,0);

 	\coordinate (A7) at (-2,8);
	\coordinate (A6) at (0,8);
	\coordinate (A5) at (2,8);
	\coordinate (A4) at (4,8);
 	\coordinate (A3) at (6,8);
	\coordinate (A2) at (8,8);
	\coordinate (A1) at (10,8);

	\node[draw,circle,inner sep=5pt,fill,red!60] at (A7) {};
	\node[draw,circle,inner sep=5pt,fill,red!60] at (A6) {};
	\node[draw,circle,inner sep=5pt,fill,red!60] at (A5) {};
	\node[draw,circle,inner sep=5pt,fill,red!60] at (A4) {};
	\node[draw,circle,inner sep=5pt,fill,red!60] at (A3) {};
	\node[draw,circle,inner sep=5pt,fill,red!60] at (A2) {};
	\node[draw,circle,inner sep=5pt,fill,red!60] at (A1) {};
	
	\node[coordinate, pin = {45:$a_1$=(7,1)}] at (A1) {};
	\node[coordinate, pin = {45:$a_2$=(6,1)}] at (A2) {};
	\node[coordinate, pin = {45:$a_3$=(5,1)}] at (A3) {};
	\node[coordinate, pin = {45:$a_4$=(4,1)}] at (A4) {};
	\node[coordinate, pin = {45:$a_5$=(3,1)}] at (A5) {};
	\node[coordinate, pin = {45:$a_6$=(2,1)}] at (A6) {};
	\node[coordinate, pin = {45:$a_7$=(1,1)}] at (A7) {};

	\node[coordinate, pin = {135:(1,2)}] at (-2,6) {};
        
	\coordinate (B7) at (0,-2);
	\coordinate (B6) at (2,-2);
	\coordinate (B5) at (4,-2);
 	\coordinate (B4) at (6,-2);
	\coordinate (B3) at (8,-2);
	\coordinate (B2) at (10,-2);
	\coordinate (B1) at (12,-2);

	\node[draw,circle,inner sep=5pt,fill,blue!60] at (B7) {};
	\node[draw,circle,inner sep=5pt,fill,blue!60] at (B6) {};
	\node[draw,circle,inner sep=5pt,fill,blue!60] at (B5) {};
	\node[draw,circle,inner sep=5pt,fill,blue!60] at (B4) {};
	\node[draw,circle,inner sep=5pt,fill,blue!60] at (B3) {};
	\node[draw,circle,inner sep=5pt,fill,blue!60] at (B2) {};
	\node[draw,circle,inner sep=5pt,fill,blue!60] at (B1) {};

	\node[coordinate, pin = {225:$b_1$=(8,9)}] at (B1) {};
	\node[coordinate, pin = {225:$b_2$=(7,9)}] at (B2) {};
	\node[coordinate, pin = {225:$b_3$=(6,9)}] at (B3) {};
	\node[coordinate, pin = {225:$b_4$=(5,9)}] at (B4) {};
	\node[coordinate, pin = {225:$b_5$=(4,9)}] at (B5) {};
	\node[coordinate, pin = {225:$b_6$=(3,9)}] at (B6) {};
	\node[coordinate, pin = {225:$b_7$=(2,9)}] at (B7) {};

	\coordinate (Shift1) at (2,-2);
	\coordinate (Shift2) at (2,-4);

      \draw[style=help lines,dashed] (-2,-2) grid[step=2cm] (12,8);

    \foreach \x in {-1,0,...,6}{
      \foreach \y in {-1,0,...,4}{
        \node[draw,circle,inner sep=2pt,fill,] at (2*\x,2*\y) {};

      }
    }

    \foreach \x in {-1,-0,...,6}{
      \foreach \y in {0,1,...,4}{
        \node[draw,circle,inner sep=2pt,fill,] at (2*\x,2*\y) {};
	 \draw [-stealth,black,line width=2pt] (2*\x,2*\y)
        -- ($(2*\x,2*\y)+(0,-2)$)
	node[pos=0.4,left,black, outer sep=-3pt] {\LARGE 1};
	
      }
    }


	\foreach \x in {-1,0,...,6}{
	
		\FPeval{\z}{clip(5-0)};
		 \draw [->>,black,line width=2pt] (2*\x,0) -- ($(2*\x,0)+(Shift1)$) node [black,midway,fill=white] {\LARGE $v_{\z}$};
	}

\foreach \x in {-1,0,...,6}{
	\foreach \y in {1,...,4}{
		\node[draw,circle,inner sep=2pt,fill,] at (2*\x,2*\y) {};
			\FPeval{\z}{clip(5-\y)};
			\draw [->>,black,line width=2pt] (2*\x,2*\y) -- ($(2*\x,2*\y)+(Shift2)$) node [black,midway,fill=white] {\LARGE $v_{\z}$ };
	}
}


\draw [->>,black!50!green,line width=4pt] (A1) -- (10,4);
\draw [->>,black!50!green,line width=4pt] (10,4) -- (B2);

\draw [->>,green!80,line width=4pt] (A2) -- (10,4);
\draw [->>,green!80,line width=4pt] (10,4) -- (12,0);
\draw [->>,black!50!green,line width=4pt] (12,0) -- (B1);

\draw [->>,blue!40!green,line width=4pt] (A3) -- (6,6);
\draw [->>,blue!40!green,line width=4pt] (6,6) -- (8,2);
\draw [->>,blue!40!green,line width=4pt] (8,2) -- (B3);

\draw [->>,blue!40!green,line width=4pt] (A4) -- (4,6);
\draw [->>,blue!40!green,line width=4pt] (4,6) -- (6,2);
\draw [->>,blue!40!green,line width=4pt] (6,2) -- (B4);

\draw [->>,black!50!green,line width=4pt] (A5) -- (2,0);
\draw [->>,green!80,line width=4pt] (2,0) -- (B5);

\draw [->>,blue!40!green,line width=4pt] (A6) -- (0,2);
\draw [->>,blue!40!green,line width=4pt] (0,2) -- (B6);

\draw [->>,blue!40!green,line width=4pt] (A7) -- (-2,0);
\draw [->>,blue!40!green,line width=4pt] (-2,0) -- (B7);


\node[ draw,circle,inner sep=8pt,fill,orange!70] at (10,4) {};
\node[] at (10,4) {\LARGE{$z$}};

\node[ draw,circle,inner sep=8pt,fill,black!40] at (12,0) {};
\node[] at (12,0) {\LARGE{$z'$}};

\node[ draw,circle,inner sep=8pt,fill,black!40] at (2,0) {};
\node[] at (2,0) {\LARGE{$z'$}};

\end{tikzpicture}
\caption{Image of the multipath from Picture \ref{Pmultipath1preproper1} under the action of $\chi$. }
\label{Pmultipath1preproper2}

\end{figure}
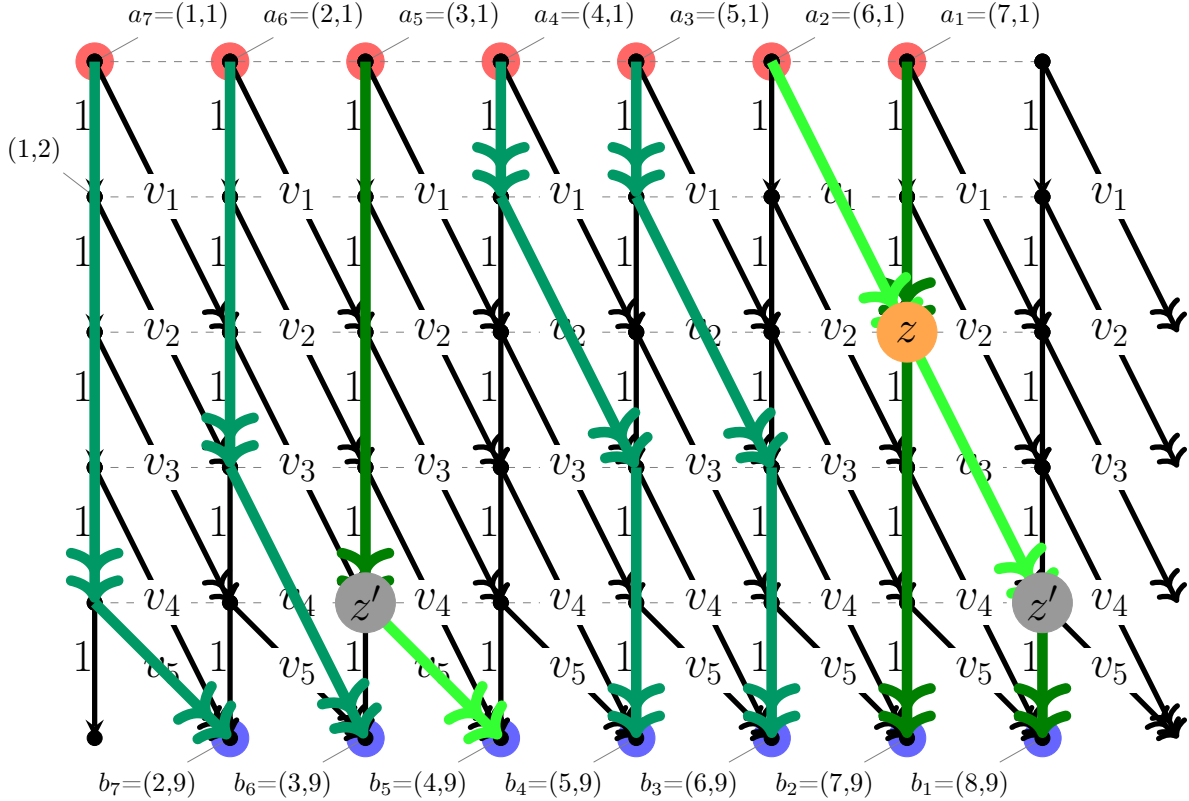

Hence, among  the sums (\ref{sumJ}), (\ref{sumI}) and (\ref{sumW}), only the latter is non-zero, we have only the set of corrects left:$$p_k^U=\sum\limits_{ \substack{(\rho_1,...,\rho_k):A\rightarrow B \\ (w(\rho_1),...,w(\rho_k))\in P^U_k}}\prod\limits_{i=1}^n w(\rho_i).$$

\end{proof}

\newcommand{\ind}{l}
This result will play a major role in our future work. The construction of a correct sequence allows to work with $m^U$ functions by expanding them in terms of $p^U$ functions. For instance, using the following relation
$$m^U_{\ind,1}=p^U_\ind\cdot p^U_1 - p^U_{\ind+1},$$
it is easy to prove the following

\begin{thm}\label{Thn1}
Let $$M_{\ind,1}^U = \{(\vs\ |z)\in P^U_{\ind}\times P^U_1| \; z\succ\vs\vee z\prec w_\ind\},$$

then $$m^U_{\ind,1}=\sum\limits_{(\vs;z)\in M^U_{\ind,1}}w_1\cdot...\cdot w_\ind\cdot z.$$
\end{thm}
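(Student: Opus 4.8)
The plan is to reduce the statement to Theorem~\ref{Ppos} through the displayed identity $m^U_{\ind,1}=p^U_{\ind}\cdot p^U_1-p^U_{\ind+1}$, and then to realize the subtraction as the deletion of an explicitly identified subset of index data. By Theorem~\ref{Ppos} (and since every length-$1$ sequence is correct, so $P^U_1=U$),
$$p^U_{\ind}\cdot p^U_1=\sum_{\vec{w}\in P^U_{\ind}}\ \sum_{z\in U} w_1\cdots w_{\ind}\cdot z,\qquad p^U_{\ind+1}=\sum_{\vec{u}\in P^U_{\ind+1}} u_1\cdots u_{\ind+1}.$$
Thus it suffices to compare the index sets $P^U_{\ind}\times U$ and $P^U_{\ind+1}$ monomial by monomial, and everything reduces to understanding which length-$(\ind{+}1)$ sequences are correct.

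First I would record the elementary closure property that every prefix of a correct sequence is again correct: both defining conditions for $(w_1,\dots,w_j)$ are literally subconditions of those for $(w_1,\dots,w_k)$ when $j<k$. This lets me define the truncation map $\Phi\colon P^U_{\ind+1}\to P^U_{\ind}\times U$ by $(u_1,\dots,u_{\ind+1})\mapsto\big((u_1,\dots,u_{\ind}),\,u_{\ind+1}\big)$, which is well defined by the closure property, manifestly injective, and monomial-preserving since $(u_1\cdots u_{\ind})\cdot u_{\ind+1}=u_1\cdots u_{\ind+1}$.

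Next I would characterize the image of $\Phi$. Given $\vec{w}\in P^U_{\ind}$ and $z\in U$, the concatenation $(\vec{w},z)=(w_1,\dots,w_{\ind},z)$ lies in $P^U_{\ind+1}$ exactly when the two new instances of the correctness conditions hold: the adjacency condition at the last step, $w_{\ind}\not\succ z$, and the connectedness condition for $j=\ind+1$, namely that there exists $i\le\ind$ with $w_i\not\prec z$. Writing $z\succ\vec{w}$ as shorthand for ``$z\succ w_i$ for all $i$,'' the latter condition is precisely the negation of $z\succ\vec{w}$. Hence $\mathrm{Im}(\Phi)=\{(\vec{w},z)\mid w_{\ind}\not\succ z\ \text{and}\ z\not\succ\vec{w}\}$, and $\Phi$ is a monomial-preserving bijection onto this set.

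Subtracting $p^U_{\ind+1}$ from $p^U_{\ind}\cdot p^U_1$ then cancels exactly the monomials indexed by $\mathrm{Im}(\Phi)$, leaving
$$m^U_{\ind,1}=\sum_{(\vec{w},z)\in (P^U_{\ind}\times U)\setminus\mathrm{Im}(\Phi)} w_1\cdots w_{\ind}\cdot z.$$
A De Morgan computation identifies the complement: the negation of $\big(w_{\ind}\not\succ z\ \text{and}\ z\not\succ\vec{w}\big)$ is $\big(w_{\ind}\succ z\big)\vee\big(z\succ\vec{w}\big)$, and since $w_{\ind}\succ z\iff z\prec w_{\ind}$ this is exactly $z\succ\vec{w}\vee z\prec w_{\ind}$, the defining condition of $M^U_{\ind,1}$; note these two disjuncts are incompatible, so no pair is counted twice. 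This yields the claimed formula. The only delicate point — and the sole place the argument can slip — is the translation between the existential/universal quantifiers of the correctness definition and the compact relations $z\succ\vec{w}$ and $z\prec w_{\ind}$, so I would state at the outset that $z\succ\vec{w}$ abbreviates ``$z\succ w_i$ for all $i$,'' making the equivalence with ``$\exists i:\,w_i\not\prec z$'' transparent.
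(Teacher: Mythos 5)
Your proof is correct and takes essentially the approach the paper itself intends: the paper only states the identity $m^U_{l,1}=p^U_l\cdot p^U_1-p^U_{l+1}$ and explicitly omits the rest. Your filling-in of the details---prefix-closure of correctness, the monomial-preserving truncation bijection from $P^U_{l+1}$ onto the concatenations $(\vec w,z)$ that remain correct, and the De Morgan step identifying the complementary index set with $M^U_{l,1}$---is precisely the omitted verification via Theorem~\ref{Ppos}.
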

\begin{remark}
According to Remark~\ref{c_m}, this implies $c_{n-1,1}(U)\geq 0$.
\end{remark}

Here, we omit the proof. This approach will be used in the next article, where positivity of some $e$-coefficients for $(3+1)$-free posets will be demonstrated.


\end{document}